\newtheorem{theorem}{Theorem}
\newtheorem{corollary}[theorem]{Corollary}
\newtheorem*{example*}{Example}
\newtheorem{lemma}[theorem]{Lemma}
\newtheorem{remark}[theorem]{Remark}
\begin{document}

\title{Remarks on non-compact gradient Ricci solitons}
\author{Stefano Pigola}
\address{Dipartimento di Fisica e Matematica\\
Universit\`a dell'Insubria - Como\\
via Valleggio 11\\
I-22100 Como, ITALY}
\email{stefano.pigola@uninsubria.it}

\date{\today}

\author{Michele Rimoldi}
\address{Dipartimento di Matematica\\
Universit\`a di Milano\\
via Saldini 50\\
I-20133 Milano, ITALY}
\email{michele.rimoldi@unimi.it}

\author{Alberto G. Setti}
\address{Dipartimento di Fisica e Matematica
\\
Universit\`a dell'Insubria - Como\\
via Valleggio 11\\
I-22100 Como, ITALY}
\email{alberto.setti@uninsubria.it}

\subjclass[2000]{53C21}

\keywords{Ricci solitons, triviality, scalar curvature,
maximum principles, Liouville-type theorems}

\begin{abstract}
In this paper we show how techniques coming from stochastic
analysis, such as stochastic completeness (in the form of the weak
maximum principle at infinity), parabolicity  and $L^p$-Liouville type
results for the weighted Laplacian associated to the potential  may be
used to obtain triviality, rigidity results, and scalar curvature estimates
for  gradient Ricci solitons under $L^p$ conditions on the relevant
quantities.
\end{abstract}

\maketitle

\section*{Introduction}

Let $\left(  M,\left\langle ,\right\rangle \right)  $ be a Riemannian
manifold. A Ricci soliton structure on $M$ is the choice of a smooth vector
field $X$ (if any) satisfying the soliton equation%
\begin{equation}
Ric+\frac{1}{2}L_{X}\left\langle ,\right\rangle =\lambda\left\langle
,\right\rangle , \label{intro1}%
\end{equation}
for some constant $\lambda\in\mathbb{R}$. Here, $Ric$ denotes the Ricci
curvature of $M$ and $L_{X}$ stands for the Lie derivative in the direction
$X$. The Ricci soliton $\left(  M,\left\langle ,\right\rangle ,X\right)  $ is
said to be shrinking, steady or expansive according to whether the
coefficient $\lambda$ appearing in equation (\ref{intro1}) satisfies
$\lambda>0,$ $\lambda=0$ or $\lambda<0$.

In the special case where $X=\nabla f$ for some smooth function $f:M\rightarrow
\mathbb{R}$, we say that $\left(  M,\left\langle ,\right\rangle ,\nabla
f\right)  $ is a gradient Ricci soliton with potential $f$. In this situation,
the soliton equation reads%
\begin{equation}
Ric+\mathrm{Hess}\left(  f\right)  =\lambda\left\langle ,\right\rangle .
\label{intro2}%
\end{equation}

Clearly, equations (\ref{intro1}) and (\ref{intro2}) can be considered as
perturbations of the Einstein equation%
\begin{equation}
Ric=\lambda\left\langle ,\right\rangle , \label{intro3}%
\end{equation}
and reduce to this latter in case $X$ is a Killing vector field. In
particular, if $X=0,$ we call the underlying Einstein manifold a trivial Ricci soliton.

In this note we will focus our attention on geodesically complete,
gradient Ricci solitons. Here are some typical examples, \cite{PW1}.

\begin{example*}
{\rm
 The standard Euclidean space $\left(  \mathbb{R}^{m},\left\langle
,\right\rangle ,\nabla f\right)  $ with%
\[
f\left(  x\right)  =\frac{1}{2}A\left\vert x\right\vert ^{2}+\left\langle
x,B\right\rangle +C,
\]
for arbitrary $A\in \mathbb{R}$, $B\in\mathbb{R}^{m}$ and $C\in\mathbb{R}$. Note that $f$
is the essentially unique solution of the equation \textrm{Hess}$\left(
f\right)  =A \left\langle
,\right\rangle $ on $\mathbb{R}^{m}$. This follows by integrating on
$[0,\left\vert x\right\vert ]$ the equation%
\[
\frac{d^{2}}{ds^{2}}\left(  f\left(  vs\right)  \right)  =A,
\]
with $v\in\mathbb{R}^{m}$ such that $\left\vert v\right\vert =1$. In fact, a
kind of converse also holds; \cite{T}, \cite{N}, \cite{PW1}.
In the Appendix we will provide a straight-forward proof.
}
\end{example*}

\begin{theorem}
\label{th_tashiro}Let $\left(  M,\left\langle ,\right\rangle \right)  $ be a
complete manifold. Suppose that there exists a smooth function $f:M\rightarrow
\mathbb{R}$ satisfying \textrm{Hess}$\left(  f\right)  =\lambda\left\langle
,\right\rangle $, for some constant $\lambda\neq0$. Then $M$ is isometric to
$\mathbb{R}^{m}$.
\end{theorem}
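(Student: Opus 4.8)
The plan is to show that, after normalizing $f$ by an additive constant, $f$ coincides with $\tfrac{\lambda}{2}$ times the squared distance from a suitable origin $o$, and then to read the flatness of $g$ off the metric written in geodesic polar coordinates about $o$. The starting observation is that the hypothesis says precisely $\nabla_{X}\nabla f=\lambda X$ for all $X$; taking $X=\nabla f$ gives $\nabla|\nabla f|^{2}=2\lambda\nabla f$, so $|\nabla f|^{2}-2\lambda f$ is constant on the connected manifold $M$. Replacing $f$ by $f$ minus a constant, I may assume $|\nabla f|^{2}=2\lambda f$; in particular $2\lambda f\ge0$ everywhere and $\{f=0\}$ is exactly the critical set of $f$. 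Moreover, integrating $\tfrac{d^{2}}{ds^{2}}f(\gamma(s))=\lambda$ along a unit-speed geodesic $\gamma$ shows that $f\circ\gamma$ is a quadratic polynomial in arclength, nondegenerate because $\lambda\neq0$.

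Next I would produce a critical point. Set $\rho:=\sqrt{2f/\lambda}\ge0$ (the radicand is nonnegative by the above); on the open set $\Omega:=\{f\neq0\}$ the function $\rho$ is smooth and, from $|\nabla f|^{2}=2\lambda f$, one computes $|\nabla\rho|\equiv1$ there. If $f$ never vanished, then $\Omega=M$ and $\nabla\rho$ would be a globally defined unit vector field whose integral curves are geodesics; flowing backwards along one of them from any point, $\rho$ would decrease at unit rate for all time by completeness, contradicting $\rho\ge0$. So fix $o$ with $f(o)=0$, hence $\nabla f(o)=0$. For an arbitrary $x$, take a minimizing unit-speed geodesic $\gamma$ from $o$ to $x$: the quadratic $f\circ\gamma$ has $f(\gamma(0))=0$ and $(f\circ\gamma)'(0)=0$, so $f(\gamma(s))=\tfrac{\lambda}{2}s^{2}$ and thus $f(x)=\tfrac{\lambda}{2}d(o,x)^{2}$. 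Consequences: $o$ is the unique critical point of $f$; $r:=d(o,\cdot)$ satisfies $r^{2}=2f/\lambda$, so $r^{2}$ is smooth on $M$ and $r$ is smooth with $|\nabla r|\equiv1$ on $M\setminus\{o\}$; and, running the same computation along every geodesic issuing from $o$, each such geodesic is globally minimizing, so $o$ has empty cut locus and $\exp_{o}\colon T_{o}M\to M$ is a diffeomorphism. Hence $M$ is diffeomorphic to $\mathbb{R}^{m}$.

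Finally I would upgrade this to an isometry. In geodesic polar coordinates about $o$ write $g=dr^{2}+h(r)$. From $f=\tfrac{\lambda}{2}r^{2}$ one gets $\mathrm{Hess}(f)=\lambda\big(dr\otimes dr+r\,\mathrm{Hess}(r)\big)$; comparing with $\mathrm{Hess}(f)=\lambda g$ and using $\mathrm{Hess}(r)(\partial_{r},\cdot)=0$ yields, on the directions tangent to the geodesic spheres, $\mathrm{Hess}(r)|_{\{r=\mathrm{const}\}}=\tfrac{1}{r}h(r)$. Since that restriction equals $\tfrac12\partial_{r}h(r)$, we obtain $\partial_{r}\big(r^{-2}h(r)\big)=0$, so $h(r)=r^{2}\sigma$ for a fixed metric $\sigma$ on $S^{m-1}$; smoothness of $g$ at $o$ forces $\sigma$ to be the round metric, whence $g=dr^{2}+r^{2}g_{S^{m-1}}$ is the Euclidean metric. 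The real obstacle is the middle step — locating $o$ and identifying $f$ with a multiple of $d(o,\cdot)^{2}$, together with the completeness argument that rules out a missing critical point; once that is done the polar-coordinate computation is routine. (The sign of $\lambda$ need not be split off: one carries $\lambda$ throughout, or, alternatively, observes that $f$ is proper and one-sidedly bounded and hence attains an extremum at a critical point.)
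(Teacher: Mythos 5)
Your proof is correct, and it follows the same overall skeleton as the paper's appendix proof: locate a critical point $o$, show $f=\frac{\lambda}{2}\,d(o,\cdot)^{2}$ up to an additive constant, then determine the spherical part of the metric in geodesic polar coordinates from a radial ODE. Two sub-steps are handled differently, in a way that is arguably more self-contained. For the existence of a critical point you first extract the first integral $|\nabla f|^{2}-2\lambda f=\mathrm{const}$ and argue with the eikonal function $\rho=\sqrt{2f/\lambda}$, whose complete unit-speed backward flow would force $\rho$ to become negative; the paper instead flows along $\nabla f/|\nabla f|$ and uses that $f$ restricted to such an integral geodesic is a nondegenerate quadratic, whose derivative must vanish somewhere. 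Both rest on the same completeness fact (a bounded vector field on a complete manifold is complete). More significantly, to get $\mathrm{cut}(o)=\emptyset$ you compare $f(\gamma(s))=\frac{\lambda}{2}s^{2}$ along an arbitrary unit-speed geodesic issuing from $o$ with the identity $f=\frac{\lambda}{2}\,d(o,\cdot)^{2}$, deducing $d(o,\gamma(s))=s$, so every geodesic from $o$ is globally minimizing; the paper instead deduces smoothness of $r=\alpha^{-1}(f)$ on $M\setminus\{o\}$ and invokes Bishop's density theorem on ordinary cut points. Your route avoids that external result at the cost of the (easy) extra observation that the two expressions for $f$ must agree. The concluding computation, $\frac{1}{2}\partial_{r}h=\frac{1}{r}h$ and hence $h=r^{2}\sigma$ with $\sigma$ forced to be round by smoothness at $o$, is identical in substance to the paper's Step 4, where the condition at $o$ appears as the asymptotic initial datum $\sigma_{\alpha\beta}=r^{2}\delta_{\alpha\beta}+o(r^{2})$ as $r\searrow 0$.
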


\begin{example*}
{\rm  The Riemannian product%
\begin{equation}
\left(  \mathbb{R}^{m}\times N^{k},\left\langle ,\right\rangle _{\mathbb{R}%
^{m}}+\left\langle ,\right\rangle _{N^{k}},\nabla f\right)  \label{intro4}%
\end{equation}
where $\left(  N^{k},\left(  ,\right)  \right)  $ is any $k$-dimensional
Einstein manifold with Ricci curvature $\lambda\ne 0$, and $f\left(  t,x\right)
:\mathbb{R}^{m}\times N^{k}\rightarrow\mathbb{R}$ is defined by%
\begin{equation}
f\left(  x,p\right)  =\frac{\lambda}{2}\left\vert x\right\vert _{\mathbb{R}%
^{m}}^{2}+\left\langle x,B\right\rangle _{\mathbb{R}^{m}}+C, \label{intro5}%
\end{equation}
with $C\in\mathbb{R}$ and $B\in\mathbb{R}^{m}$.
}
\end{example*}

As generalizations of Einstein manifolds, Ricci solitons enjoy
some rigidity properties, which can take the form of
classification (metric rigidity), or alternatively, triviality
of the soliton structure (soliton rigidity). For instances of the former,
see e.g. the recent far-reaching paper \cite{Zhang-ArXiv}
and references therein.

As for the latter, it has been known for some time that compact,
expanding Ricci solitons are necessarily trivial, \cite{ELM}.
Our first main result, Theorem~\ref{th_expanding} below, extends this conclusion to
the non-compact setting up to imposing suitable integrability
conditions on the potential function.

Indeed, the aim of this paper is two-fold. On the one hand we
obtain triviality, rigidity results, and scalar curvature estimates for
gradient Ricci solitons under $L^p$ conditions on the relevant
quantities that extend and generalize, often in a significant way,
previous results.

On the other hand, we show how  techniques coming
from stochastic analysis, such as stochastic completeness, in the form of
the weak maximum principle at infinity, parabolicity and
$L^p$-Liouville type results for the weighted Laplacian associated
to the potential $f$, are natural in the investigations of (gradient)
Ricci solitons, and lead to elegant proofs of the above mentioned results.

\begin{theorem}
\label{th_expanding}A complete, expanding, gradient Ricci soliton \ $\left(
M,\left\langle ,\right\rangle ,\nabla f\right)  $ is trivial provided
$\ \left\vert \nabla f\right\vert \in L^{p}\left(  M,e^{-f}d\mathrm{vol}%
\right)  ,$ for some $1\leq p\leq+\infty$.
\end{theorem}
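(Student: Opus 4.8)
The plan is to show that $v:=|\nabla f|$ is a nonnegative subsolution of the weighted (Bakry--\'Emery) Laplacian $\Delta_{f}:=\Delta-\langle\nabla f,\nabla\,\cdot\,\rangle$ on the weighted manifold $\left(M,\langle,\rangle,e^{-f}d\mathrm{vol}\right)$, with an inequality strong enough that an $L^{p}$-Liouville statement forces $v$ to vanish identically; once $\nabla f\equiv0$, equation~(\ref{intro2}) reduces to $Ric=\lambda\langle,\rangle$ and the soliton is trivial. To set this up I would first record the standard structural identities of gradient Ricci solitons. Tracing~(\ref{intro2}) gives $S+\Delta f=m\lambda$ ($S$ the scalar curvature, $m=\dim M$), hence $\Delta_{f}f=\Delta f-|\nabla f|^{2}=m\lambda-S-|\nabla f|^{2}$; differentiating Hamilton's identity $S+|\nabla f|^{2}-2\lambda f\equiv\mathrm{const}$ (itself an immediate consequence of~(\ref{intro2}) and the contracted second Bianchi identity) gives $\nabla S+\nabla|\nabla f|^{2}=2\lambda\nabla f$, so that $\nabla\Delta_{f}f=-2\lambda\nabla f$. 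Substituting into the weighted Bochner formula $\tfrac12\Delta_{f}|\nabla f|^{2}=|\mathrm{Hess}\,f|^{2}+\langle\nabla f,\nabla\Delta_{f}f\rangle+Ric_{f}(\nabla f,\nabla f)$, and using that on a soliton $Ric_{f}:=Ric+\mathrm{Hess}\,f=\lambda\langle,\rangle$, one gets $\tfrac12\Delta_{f}|\nabla f|^{2}=|\mathrm{Hess}\,f|^{2}-\lambda|\nabla f|^{2}\geq-\lambda|\nabla f|^{2}\geq0$ because $\lambda<0$. By Kato's inequality $|\nabla|\nabla f||\leq|\mathrm{Hess}\,f|$, together with the usual regularization $\sqrt{|\nabla f|^{2}+\varepsilon^{2}}$ across the zero set of $\nabla f$, this upgrades to $\Delta_{f}v\geq-\lambda v\geq0$ weakly on all of $M$.

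Since by hypothesis $v\in L^{p}\left(M,e^{-f}d\mathrm{vol}\right)$, I would then apply an $L^{p}$-Liouville theorem for $\Delta_{f}$ on the complete weighted manifold $\left(M,e^{-f}d\mathrm{vol}\right)$. For $1<p<+\infty$ this is the weighted analogue of the Yau--Karp theorem for nonnegative subharmonic functions (its Caccioppoli-type proof carries over verbatim, replacing $d\mathrm{vol}$ by $e^{-f}d\mathrm{vol}$ and $\Delta$ by $\Delta_{f}$), and it gives $v\equiv c$ for some constant $c\geq0$; then $0=\Delta_{f}c\geq-\lambda c$ with $\lambda<0$ forces $c=0$. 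For the endpoints $p=1,+\infty$ I would instead use that $\Delta_{f}$ is stochastically complete, equivalently that the weak maximum principle at infinity holds for $\Delta_{f}$: for a complete expanding soliton the potential satisfies a priori bounds $|f|\leq C(1+r^{2})$ and $|\nabla f|\leq C(1+r)$, so that $\mathrm{vol}_{f}(B_{r})\leq\exp(Cr^{2})$ and Grigor'yan's volume test applies. If $p=+\infty$, the weak maximum principle applied to the bounded subharmonic $v$ yields points $x_{k}$ with $v(x_{k})\to\sup v$ and $\Delta_{f}v(x_{k})\leq 1/k$, whence $0\geq-\lambda\sup v$ and $v\equiv0$; if $p=1$, stochastic completeness forces the nonnegative $L^{1}$-subharmonic function $v$ to be $\Delta_{f}$-harmonic, and then $0=\Delta_{f}v\geq-\lambda v\geq0$ again gives $v\equiv0$. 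In every case $\nabla f\equiv0$, and the soliton is trivial.

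The step I expect to be the real obstacle is the treatment of the extreme exponents $p=1$ and $p=+\infty$: the clean $L^{p}$-Liouville theorem is available on an arbitrary complete weighted manifold only for $1<p<+\infty$, and covering the endpoints forces one to exploit the specific geometry of expanding solitons --- the quadratic, respectively linear, a priori growth of $f$ and $|\nabla f|$ and the resulting stochastic completeness of $\Delta_{f}$ --- which is precisely where the stochastic-analytic machinery advertised in the Introduction enters. One should also be careful with the regularization step that promotes the pointwise inequality for $|\nabla f|$ to a global weak inequality, and make sure the a priori growth estimates for expanding solitons are available in the form needed.
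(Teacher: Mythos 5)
Your overall architecture is the same as the paper's: the weighted Bochner identity (\ref{basic3}) plus Kato gives $\left\vert \nabla f\right\vert \Delta_{f}\left\vert \nabla f\right\vert \geq-\lambda\left\vert \nabla f\right\vert ^{2}\geq0$ weakly (Corollary \ref{cor_basicineq}); the case $1<p<\infty$ via a weighted Yau-type $L^{p}$-Liouville theorem is exactly the paper's Theorem \ref{th_liouville} and Theorem \ref{th_trivexp2}; and the case $p=\infty$ via the weak maximum principle at infinity is the paper's Theorem \ref{th_trivexp1}. However, one of your justifications for $p=\infty$ is incomplete: the bound $\mathrm{vol}_{f}\left(  B_{r}\right)  \leq e^{Cr^{2}}$ does \emph{not} follow from $|f|\leq C(1+r^{2})$ and $|\nabla f|\leq C(1+r)$ alone, since a pointwise bound on $e^{-f}$ controls $\int_{B_{r}}e^{-f}d\mathrm{vol}$ only if one also controls the unweighted volume $\mathrm{vol}\left(  B_{r}\right)  $, which is not a priori bounded. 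The correct tool is the Wei--Wylie weighted volume comparison under $Ric_{f}\geq\lambda$ (Theorem \ref{th_comparison}), combined with the volume test of Theorem \ref{th_maximum}; this is easily fixed by citing the comparison, after which your $p=\infty$ argument goes through.

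The serious gap is the endpoint $p=1$. You claim that stochastic completeness of $\Delta_{f}$ forces a nonnegative $L^{1}$ $\Delta_{f}$-\emph{sub}harmonic function to be harmonic. Grigor'yan's equivalence (non-integrability of the Green kernel, hence the implication from the weak maximum principle) concerns \emph{super}harmonic functions, i.e.\ solutions of $\Delta_{f}u\leq0$ --- this is the paper's Theorem \ref{th_l1-superharmonic} --- and it gives nothing for subsolutions. Indeed the implication you use is false in general: on the weighted line $\left(\mathbb{R},dx^{2},e^{-x^{4}}dx\right)$, which has finite weighted measure and is therefore $f$-parabolic and stochastically complete, the function $u=|h|$ with $h(x)=\int_{0}^{x}e^{t^{4}}dt$ (a $\Delta_{f}$-harmonic function vanishing at $0$) is nonnegative, nonconstant, weakly $\Delta_{f}$-subharmonic, and lies in $L^{1}$ of the weighted measure since $|h(x)|e^{-x^{4}}\sim\frac{1}{4|x|^{3}}$. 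So as written your $p=1$ case is not proved. The paper closes this endpoint with a different $L^{1}$-type Liouville theorem valid for \emph{subsolutions} (Theorem \ref{th_rs-revista} and its corollary), which requires, in addition to the integrability of $\int_{\partial B_{r}}u\,e^{-f}d\mathrm{vol}_{m-1}$, the pointwise growth bound $u=O\left(e^{\beta r^{2}}\right)$; for expanders this extra hypothesis is supplied precisely by Zhang's estimate $|\nabla f|\leq c(1+r)$ (Theorem \ref{lo-estimate}). In other words, the gradient estimate you quoted must be used in the Liouville step itself, not only in the volume estimate.
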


As a matter of fact, the above statement encloses three different results
according to the assumption that $p=+\infty$, $1<p<+\infty$ and $p=1$. These
will be obtained using different arguments. The $L^{\infty}$ \ situation will
be dealt with using a form of the weak maximum principle at infinity for diffusion
operators, \cite{PRS-Memoirs}, which makes an essential use of a
volume growth estimate for weighted manifolds, \cite{WW}.

This method allows, for instance, to obtain the
following estimate for the scalar curvature, which improves
results in \cite{PW1} where it is assumed that the scalar
curvature is either constant or bounded.
\begin{theorem}
\label{th_scalar_bound}
Let $(M, \langle,\,\rangle, \nabla f) $ be a geodesically complete
gradient Ricci soliton with scalar curvature $S$ and let $S_*= \inf_M
S$. If $M$ is expanding then $m\lambda \leq S_*\leq  0$; if
$M$ is shrinking then $0\leq  S_*\leq m\lambda $. Moreover,  $S_*<
m\lambda$ unless  the soliton  is trivial and $M$ is compact
Einstein, and  $S(x)>0$ on $M$ unless $S(x)\equiv 0$ on $M$,
and $M$ is isometric to $\mathbb{R}^{m}$.
\end{theorem}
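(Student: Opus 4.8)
The plan is to reduce the statement to a Bochner-type differential inequality for the scalar curvature, coupled with the weak maximum principle at infinity for the $f$-Laplacian $\Delta_f=\Delta-\langle\nabla f,\nabla\,\cdot\,\rangle$.

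First I would collect the classical soliton identities obtained by tracing and differentiating (\ref{intro2}): $S+\Delta f=m\lambda$, $\tfrac12\nabla S=\mathrm{Ric}(\nabla f)$, and $S+|\nabla f|^{2}-2\lambda f=\mathrm{const}$. Inserting these into the Bochner formula for $|\nabla f|^{2}$ (equivalently, differentiating the third identity and using the first two) yields the identity
\[
\Delta_{f}S=2\lambda S-2|\mathrm{Ric}|^{2}.
\]
Since $|\mathrm{Ric}|^{2}\ge S^{2}/m$ by Cauchy--Schwarz, this gives the key differential inequality
\[
\Delta_{f}S\le \frac{2}{m}\,S\,(m\lambda-S)\qquad\text{on }M.
\]

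Next comes the analytic input announced in the Introduction: I would check that the weighted manifold $(M,e^{-f}d\mathrm{vol})$ is stochastically complete, so that the weak maximum principle at infinity holds for $\Delta_{f}$, by combining the Wei--Wylie volume estimate for gradient Ricci solitons with a Grigor'yan-type volume criterion for the drifted Laplacian. Applying the weak maximum principle at infinity to $-S$ then produces a sequence $x_{k}$ with $S(x_{k})\to S_{*}$ and $\liminf_{k}\Delta_{f}S(x_{k})\ge 0$; before doing so one must reduce to the case $S_{*}>-\infty$, which follows from the same principle since a minimizing sequence along which $S\to-\infty$ would, by the displayed inequality, force $\Delta_{f}S\to-\infty$ there, a contradiction (alternatively one applies the principle to a bounded reparametrization of $-S$). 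Evaluating the inequality along $x_{k}$ gives $0\le\frac{2}{m}S_{*}(m\lambda-S_{*})$, i.e. $S_{*}(S_{*}-m\lambda)\le 0$; reading off the sign of this product according to $\lambda<0$ or $\lambda>0$ yields $m\lambda\le S_{*}\le 0$ in the expanding case and $0\le S_{*}\le m\lambda$ in the shrinking case.

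For the rigidity statements I would analyze the equality cases. If $S_{*}$ equals the non-trivial endpoint $m\lambda$, then along $x_{k}$ one has $|\mathrm{Ric}|^{2}\to S_{*}^{2}/m$; combined with $\Delta_{f}S\le 0$ near the infimum and the strong maximum principle this forces $S\equiv m\lambda$, hence $|\mathrm{Ric}|^{2}\equiv S^{2}/m$, so $\mathrm{Ric}=\lambda\langle,\rangle$ and $\mathrm{Hess}(f)\equiv 0$; then the identity $S+|\nabla f|^{2}-2\lambda f=\mathrm{const}$ forces $f$ constant (as $\lambda\neq 0$), the soliton is trivial Einstein, and in the shrinking case Bonnet--Myers makes $M$ compact. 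Finally, if $S$ vanishes at a point realizing $S_{*}=0$, then near that point $\Delta_{f}S\le c(x)S$ with $c(x)=\tfrac{2}{m}(m\lambda-S(x))$ locally bounded and $S\ge 0$, so $\Delta_{f}S\le C_{0}S$ for a constant, and the strong maximum principle gives $S\equiv 0$ on $M$; then $|\mathrm{Ric}|^{2}\equiv 0$, so $M$ is Ricci-flat with $\mathrm{Hess}(f)=\lambda\langle,\rangle$, and Theorem~\ref{th_tashiro} gives $M$ isometric to $\mathbb{R}^{m}$, while if $S_{*}=0$ is not attained then $S>0$ everywhere. The main obstacle I anticipate is the analytic step: extracting from (\ref{intro2}) the volume growth bound needed for stochastic completeness of $\Delta_{f}$ uniformly in the sign of $\lambda$, and making rigorous the reduction to $S$ bounded below when no a priori bound is available.
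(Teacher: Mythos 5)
Your overall strategy coincides with the paper's: the identity $\Delta_f S=2\lambda S-2|\mathrm{Ric}|^2$ (the paper records it as $\Delta_fS=\lambda S-|\mathrm{Ric}|^2$; the discrepancy in the constant is immaterial for the signs involved), Cauchy--Schwarz, and the weak maximum principle at infinity for $\Delta_f$ obtained from the Wei--Wylie weighted volume comparison. The two endpoint inequalities for $S_*$ and the dichotomy $S>0$ or $S\equiv 0$ (via the strong maximum principle and Theorem~\ref{th_tashiro}) are handled exactly as in the paper. However, there are two genuine gaps.

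First, your reduction to $S_*>-\infty$ does not work as stated. If $S(x_k)\to-\infty$ then indeed $\Delta_fS(x_k)\to-\infty$ along that sequence, but nothing forbids this: the weak maximum principle only applies to functions bounded above, so there is no contradiction to derive. The parenthetical ``bounded reparametrization'' is the right instinct but is delicate, since $\Delta_f(\phi\circ u)$ picks up a $\phi''|\nabla u|^2$ term of uncontrolled sign. The paper instead applies a quantitative a~priori estimate (Theorem~\ref{th_apriori}, adapted from \cite{PRS-gafa}) to $S_-=\max\{-S,0\}$, which satisfies $\Delta_fS_-\geq\lambda S_-+\frac1m S_-^2$; this yields the explicit bound $S_-\leq -m\lambda$ under the same weighted volume growth. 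You need some such tool here.

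Second, and more seriously, your treatment of the rigidity case $S_*=m\lambda>0$ is incomplete. Knowing $|\mathrm{Ric}|^2\to S_*^2/m$ only along a sequence $x_k$ (possibly diverging to infinity) does not give equality in Cauchy--Schwarz anywhere, and the strong maximum principle requires the infimum $S_*=m\lambda$ to be \emph{attained} at an interior point, which need not happen on a non-compact manifold. The paper's argument at this point is different in an essential way: since $\lambda>0$, the weighted volume $\mathrm{vol}_f(M)$ is finite by Theorem~\ref{th_comparison}, hence the shrinking soliton is $f$-parabolic (Theorem~\ref{th_solitonparabolic}), and the bounded-below supersolution $S$ of $\Delta_fS\leq0$ must then be constant. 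Only after that does the pointwise equality $|\mathrm{Ric}|^2=S^2/m$, and hence $\mathrm{Ric}=\lambda\langle,\rangle$, Myers' compactness, and triviality of $f$, follow. Without invoking parabolicity (or some substitute Liouville property), this step of your proof does not close.
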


On the other hand, the $L^{1<p<\infty}$ and the $L^{1}$ results will
rely on suitable Liouville-properties of the diffusion operators, \cite{PRS-vanishing},
\cite{PRS-Progress}, \cite{PRS-Memoirs}.

Further remarks on $L^{1}$-Liouville type theorems will be given in a final
section. As an application we will deduce interesting rigidity results for Ricci
solitons with integrable scalar curvature that should be compared with
\cite{PW1}, \cite{PW2}. Note that, combining Lemma 2.3 in \cite{CZ} with a volume estimate for
weighted manifolds, \cite{Mo-notices}, \cite{WW},
it follows that the scalar curvature of a shrinking Ricci soliton is $p$-integrable, for every $p>0$. We are grateful to
M. Fern\'{a}ndez-L\'{o}pez for pointing out this to us. In the expanding case we shall prove the next result. It shows that some rigidity at the end-point case $S_*=0$ in Theorem \ref{th_scalar_bound} occurs also for expanders.

\begin{theorem}
\label{th_scalar_L1}
Let $\left(  M,\left\langle ,\right\rangle ,\nabla f\right)  $ be a complete,
expanding, Ricci soliton. Let $S$ be the scalar curvature of $M$. If $S\geq0$
and $S\in L^{1}\left(  M,e^{-f}d\mathrm{vol}\right)  $ then $M$ is isometric
to the standard Euclidean space.
\end{theorem}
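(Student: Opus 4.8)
The plan is to exploit the classical identities for gradient Ricci solitons together with a single integration by parts against the measure $d\mu=e^{-f}\,d\mathrm{vol}$, for which the $f$-Laplacian $\Delta_{f}=\Delta-\langle\nabla f,\nabla\cdot\rangle$ is symmetric. Tracing the soliton equation gives $\Delta f=m\lambda-S$; taking a further divergence and using the contracted Bianchi identity gives $\nabla S=2\,Ric(\nabla f)$; and these combine into Hamilton's identity $|\nabla f|^{2}+S-2\lambda f\equiv C$ for a constant $C$, as well as the well-known formula $\Delta_{f}S=2\lambda S-2|Ric|^{2}$. Since $\lambda<0$ and $S\geq 0$, the latter already shows that $S$ is a nonnegative, $\Delta_{f}$-superharmonic function which in fact satisfies the sharper inequality $\Delta_{f}S\leq 2\lambda S\leq 0$; note also that $S\geq 0$ together with Theorem~\ref{th_scalar_bound} forces $S_{*}=0$, so we sit exactly at the endpoint case of that theorem and it suffices to prove $S\equiv 0$.

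The key idea is to use cut-offs built from $f$ rather than from the distance function. From $|\nabla f|^{2}=2\lambda f+C-S\geq 0$ and $S\geq 0$ one gets $\sup_{M}f<\infty$, so $\rho:=\sup_{M}f-f\geq 0$ is a well-defined smooth function; moreover, by the quadratic-growth estimate for the potential of an expanding soliton (which forces $f$ to diverge to $-\infty$ at infinity), $\rho$ is a proper exhaustion of $M$. Hamilton's identity gives the exact linear relation $\Delta_{f}\rho=2|\lambda|\rho+C_{1}$ together with $|\nabla\rho|^{2}=2|\lambda|\rho+C_{2}-S\leq 2|\lambda|\rho+C_{2}$. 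Fixing $\psi\in C^{\infty}([0,\infty))$ with $\psi\equiv 1$ on $[0,1]$, $\psi\equiv 0$ on $[2,\infty)$ and $|\psi'|+|\psi''|\leq C_{0}$, and setting $\phi_{R}:=\psi(\rho/R)$, on the annulus $\{R\leq\rho\leq 2R\}$ the two estimates give $|\nabla\rho|^{2}\leq 4|\lambda|R+C_{2}$ and $|\Delta_{f}\rho|\leq 4|\lambda|R+|C_{1}|$, whence $|\Delta_{f}\phi_{R}|\leq C_{0}R^{-2}|\nabla\rho|^{2}+C_{0}R^{-1}|\Delta_{f}\rho|\leq C_{3}$ with $C_{3}$ independent of $R\geq 1$.

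To conclude, multiply $\Delta_{f}S\leq 2\lambda S$ by $\phi_{R}\geq 0$ and integrate against $d\mu$; since $\phi_{R}$ has compact support, $\int_{M}\phi_{R}\Delta_{f}S\,d\mu=\int_{M}S\,\Delta_{f}\phi_{R}\,d\mu$, so that
\[
2|\lambda|\int_{M}\phi_{R}S\,d\mu\leq\int_{M}S\,|\Delta_{f}\phi_{R}|\,d\mu\leq C_{3}\int_{\{\rho\geq R\}}S\,d\mu .
\]
As $R\to\infty$ the right-hand side tends to $0$ because $S\in L^{1}(\mu)$ and $\rho$ is proper, while the left-hand side tends to $2|\lambda|\int_{M}S\,d\mu$ by monotone convergence; hence $\int_{M}S\,d\mu=0$ and, $S$ being nonnegative and continuous, $S\equiv 0$. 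Feeding this back into $\Delta_{f}S=2\lambda S-2|Ric|^{2}$ gives $Ric\equiv 0$, so the soliton equation reduces to $\mathrm{Hess}(f)=\lambda\langle,\rangle$ with $\lambda\neq 0$, and Theorem~\ref{th_tashiro} yields that $M$ is isometric to $\mathbb{R}^{m}$.

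The step I expect to be most delicate is the properness of the weighted exhaustion $\rho$ — equivalently, the quadratic-type lower bound for $-f$ on an expanding soliton. This is the analogue for expanders of the Cao–Zhou estimate, and it is precisely what lets one avoid distance cut-offs, which would instead require an upper bound on $\Delta r$ (hence a lower bound on $Ric$) not available a priori. Everything else is routine; alternatively, the construction of $\rho$ together with the bound on $\Delta_{f}\phi_{R}$ can be rephrased as the verification that the weighted manifold $(M,d\mu)$ satisfies the hypotheses of an $L^{1}$-Liouville theorem for $\Delta_{f}$-superharmonic functions, after which $S\equiv\mathrm{const}=0$ follows at once.
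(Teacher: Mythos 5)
Your overall strategy coincides with the paper's: reduce everything to the statement that a nonnegative, $\Delta_f$-superharmonic function in $L^1(M,e^{-f}d\mathrm{vol})$ on an expander must be constant, conclude $S\equiv 0$, hence $Ric\equiv 0$ from the identity for $\Delta_f S$, and finish with Theorem~\ref{th_tashiro}. The difference is in how the Liouville step is implemented, and that is where there is a genuine gap. Your cut-offs $\phi_R=\psi(\rho/R)$ are compactly supported only if $\rho=\sup_M f-f$ is a proper exhaustion, i.e.\ only if $f(x)\to-\infty$ as $r(x)\to\infty$. You justify this by appeal to ``the quadratic-growth estimate for the potential of an expanding soliton'', but the available estimates go the wrong way: Theorem~\ref{lo-estimate} gives $f(x)\le c(1+r(x)^2)$, and Hamilton's identity $S+|\nabla f|^2-2\lambda f=C$ together with $S\ge S_*>-\infty$ gives $|\nabla\rho|^2\le 2|\lambda|\rho+C_2$, hence $\rho(x)\le \frac{|\lambda|}{2}(r(x)+c)^2$. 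These are \emph{upper} bounds on $|f|$ and on $\rho$; they do not force $f$ to diverge. The Cao--Zhou lower bound for shrinkers is obtained by integrating $\mathrm{Hess}(f)(\dot\gamma,\dot\gamma)=\lambda-Ric(\dot\gamma,\dot\gamma)$ along minimizing geodesics and using the second variation of arc length to bound $\int Ric(\dot\gamma,\dot\gamma)\,dt$ from \emph{above}; for an expander the analogous computation reads $\mathrm{Hess}(\rho)(\dot\gamma,\dot\gamma)=|\lambda|+Ric(\dot\gamma,\dot\gamma)$, and a lower bound on $\rho$ would require a \emph{lower} bound on $\int Ric(\dot\gamma,\dot\gamma)\,dt$, which $S\ge 0$ does not provide (it would follow from $Ric\ge 0$, but that is not a hypothesis). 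From $\Delta_f\rho= m|\lambda|+S+|\nabla f|^2\ge m|\lambda|>0$ and stochastic completeness you can conclude that $\rho$ is unbounded, but unbounded is strictly weaker than proper, so the integration by parts is not justified as written.

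The rest of your argument is sound: the derivation of $\sup_M f<\infty$, the uniform bound $|\Delta_f\phi_R|\le C_3$ on the annuli, the limiting argument giving $\int_M S\,e^{-f}d\mathrm{vol}=0$, and the passage $S\equiv 0\Rightarrow Ric\equiv 0\Rightarrow \mathrm{Hess}(f)=\lambda\langle,\rangle\Rightarrow M\cong\mathbb{R}^m$ all match the paper's endgame (the factor-of-two discrepancy between your $\Delta_fS=2\lambda S-2|Ric|^2$ and (\ref{basic7}) is a normalization matter and harmless). The fix is exactly the alternative you sketch in your last sentence, and it is what the paper does: the $L^1$-Liouville property for $\Delta_f$-superharmonic functions (Theorem~\ref{th_l1-superharmonic}) is deduced not from level sets of $f$ but from the weak maximum principle at infinity, equivalently stochastic completeness of the $f$-diffusion, which follows from the Wei--Wylie weighted volume comparison $\mathrm{vol}_f(B_r)\le A+B\int e^{-\lambda t^2+Ct}dt$ via Theorems~\ref{th_comparison} and~\ref{th_maximum}; no properness of the potential and no lower Ricci bound are needed. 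If you prefer a direct cut-off proof, use distance cut-offs together with that weighted volume growth rather than sublevel sets of $f$.
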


\section*{Acknowledgment}
The authors would like to thank M. Fern\'{a}ndez-L\'{o}pez for a careful reading of a preliminary version of the
paper and for valuable comments that led, in particular, to a substantial improvement in the case $p=1$ of Theorem \ref{th_expanding}.
\section{Basic equations}

The geometric quantities related to gradient Ricci solitons satisfy a
number of differential identities that have been explored in \ several papers. We are
interested in the elliptic point of view, therefore we limit ourselves to
quoting the interesting papers \cite{ELM} and \cite{PW1}, \cite{PW2}, which are
particularly relevant to our investigation. Following the notation introduced
in \cite{PW1}, \cite{PW2}, we set%
\begin{equation}
\Delta_{f}u=e^{f}\operatorname{div}\left(  e^{-f}\nabla u\right)  .
\label{basic1}%
\end{equation}
In the next sections we will use the following Bochner-type identities.

\begin{lemma}
\label{lemma_basiceq}Let $\left(  M,\left\langle ,\right\rangle ,\nabla
f\right)  $ be a gradient Ricci soliton. Then%
\begin{equation}
\frac{1}{2}\Delta\left\vert \nabla f\right\vert ^{2}=\left\vert \mathrm{Hess}%
\left(  f\right)  \right\vert ^{2}-Ric\left(  \nabla f,\nabla f\right)
\label{basic2}%
\end{equation}
and%
\begin{equation}
\frac{1}{2}\Delta_{f}\left\vert \nabla f\right\vert ^{2}=\left\vert
\mathrm{Hess}\left(  f\right)  \right\vert ^{2}-\lambda\left\vert \nabla
f\right\vert ^{2}, \label{basic3}%
\end{equation}
where $\lambda$ is defined in (\ref{intro2}).
\end{lemma}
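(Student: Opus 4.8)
The plan is to prove Lemma~\ref{lemma_basiceq} by applying the standard Bochner formula to the potential function $f$ and then rewriting the curvature and Hessian terms using the soliton equation \eqref{intro2}.

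First I would recall the Bochner formula for the function $f$,
\begin{equation*}
\frac{1}{2}\Delta\left\vert \nabla f\right\vert ^{2}=\left\vert \mathrm{Hess}\left(  f\right)  \right\vert ^{2}+\left\langle \nabla f,\nabla\Delta f\right\rangle +Ric\left(  \nabla f,\nabla f\right)  .
\end{equation*}
Next I would extract two scalar consequences of the soliton equation \eqref{intro2}. Tracing \eqref{intro2} gives $S+\Delta f=m\lambda$, hence $\Delta f=m\lambda-S$ and, differentiating, $\nabla\Delta f=-\nabla S$. On the other hand, contracting the second Bianchi identity applied to \eqref{intro2} (equivalently, using the well-known soliton identity $\frac{1}{2}\nabla S=Ric(\nabla f,\cdot)$, which follows by taking the divergence of \eqref{intro2} and using $\mathrm{div}\,Ric=\frac{1}{2}\nabla S$ together with the commutation formula for the Hessian) yields $\left\langle \nabla f,\nabla S\right\rangle =2\,Ric(\nabla f,\nabla f)$. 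Combining these two facts gives $\left\langle \nabla f,\nabla\Delta f\right\rangle =-\left\langle \nabla f,\nabla S\right\rangle =-2\,Ric(\nabla f,\nabla f)$, and substituting into Bochner's formula produces exactly \eqref{basic2}:
\begin{equation*}
\frac{1}{2}\Delta\left\vert \nabla f\right\vert ^{2}=\left\vert \mathrm{Hess}\left(  f\right)  \right\vert ^{2}-Ric\left(  \nabla f,\nabla f\right)  .
\end{equation*}

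For \eqref{basic3} I would observe that, by definition \eqref{basic1}, $\Delta_{f}\left\vert \nabla f\right\vert ^{2}=\Delta\left\vert \nabla f\right\vert ^{2}-\left\langle \nabla f,\nabla\left\vert \nabla f\right\vert ^{2}\right\rangle $. The extra drift term can be computed from \eqref{intro2}: since $\frac{1}{2}\nabla\left\vert \nabla f\right\vert ^{2}=\mathrm{Hess}(f)(\nabla f,\cdot)$ and, by \eqref{intro2}, $\mathrm{Hess}(f)(\nabla f,\cdot)=\lambda\nabla f-Ric(\nabla f,\cdot)$, we get $\frac{1}{2}\left\langle \nabla f,\nabla\left\vert \nabla f\right\vert ^{2}\right\rangle =\lambda\left\vert \nabla f\right\vert ^{2}-Ric(\nabla f,\nabla f)$. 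Subtracting twice this from \eqref{basic2} cancels the $Ric(\nabla f,\nabla f)$ terms and leaves
\begin{equation*}
\frac{1}{2}\Delta_{f}\left\vert \nabla f\right\vert ^{2}=\left\vert \mathrm{Hess}\left(  f\right)  \right\vert ^{2}-\lambda\left\vert \nabla f\right\vert ^{2},
\end{equation*}
which is \eqref{basic3}.

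The only genuinely nontrivial input is the soliton identity $\tfrac{1}{2}\nabla S=Ric(\nabla f,\cdot)$; everything else is bookkeeping. I expect the main obstacle to be carrying out that contracted-Bianchi computation cleanly — one must differentiate \eqref{intro2}, apply the contracted second Bianchi identity $\mathrm{div}\,Ric=\tfrac12\nabla S$, and use the commutation of covariant derivatives on $\mathrm{Hess}(f)$ (which reintroduces a curvature term that combines correctly) — but this is a standard computation in the soliton literature and can be quoted or reproduced in a few lines.
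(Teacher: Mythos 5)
Your proof is correct: the paper itself states Lemma~\ref{lemma_basiceq} without proof (quoting \cite{ELM}, \cite{PW1}, \cite{PW2}), and your derivation --- Bochner's formula plus the traced soliton equation $\Delta f = m\lambda - S$ and the contracted-Bianchi identity $\tfrac12\nabla S = Ric(\nabla f,\cdot)$, followed by $\Delta_f = \Delta - \langle\nabla f,\nabla\cdot\rangle$ and $\tfrac12\nabla|\nabla f|^2 = \mathrm{Hess}(f)(\nabla f,\cdot) = \lambda\nabla f - Ric(\nabla f,\cdot)$ --- is exactly the standard computation those references give. One cosmetic slip: to pass from (\ref{basic2}) to (\ref{basic3}) you subtract $\tfrac12\langle\nabla f,\nabla|\nabla f|^2\rangle$ \emph{once} from (\ref{basic2}) (not ``twice''), and your displayed conclusion is the correct one.
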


In particular, combining Lemma \ref{lemma_basiceq} with the Kato inequality%
\begin{equation}
\left\vert \mathrm{Hess}\left(  f\right)  \right\vert ^{2}\geq\left\vert
\nabla\left\vert \nabla f\right\vert \right\vert ^{2}, \label{basic4}%
\end{equation}
we deduce the next

\begin{corollary}
\label{cor_basicineq}Let $\left(  M,\left\langle ,\right\rangle ,\nabla
f\right)  $ be a gradient Ricci soliton. Then, $\left\vert \nabla f\right\vert
\in Lip_{loc}\left(  M\right)  $ satisfies%
\begin{equation}
\left\vert \nabla f\right\vert \Delta\left\vert \nabla f\right\vert
\geq-Ric\left(  \nabla f,\nabla f\right)  \label{basic5}%
\end{equation}
weakly on $M$  and%
\begin{equation}
\left\vert \nabla f\right\vert \Delta_{f}\left\vert \nabla f\right\vert
\geq-\lambda\left\vert \nabla f\right\vert ^{2}, \label{basic6}%
\end{equation}
weakly on $\left(  M,e^{-f}d\mathrm{vol}\right)  $.
\end{corollary}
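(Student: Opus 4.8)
The plan is to promote the Bochner identities of Lemma~\ref{lemma_basiceq}, which hold classically for the smooth function $|\nabla f|^{2}$, to weak differential inequalities for $|\nabla f|$ itself; the only point requiring care is the behaviour on the (possibly large) zero set of $|\nabla f|$. First I would record the regularity claim: since $f$ is smooth, $w:=|\nabla f|^{2}\in C^{\infty}\left(M\right)$ is nonnegative, hence $|\nabla f|=\sqrt{w}$ is locally Lipschitz (a standard fact for nonnegative $C^{2}$ functions: on any compact set $|\nabla w|^{2}\leq C\,w$ for a suitable $C$, so $|\nabla\sqrt{w}|=|\nabla w|/(2\sqrt{w})$ stays bounded). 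Thus $\nabla|\nabla f|$ is defined almost everywhere and lies in $L^{\infty}_{loc}\left(M\right)$, and, $|\nabla f|$ being nonnegative, $\nabla|\nabla f|=0$ a.e. on $\left\{|\nabla f|=0\right\}$ by Rademacher's theorem.

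Next I would regularize. For $\varepsilon>0$ set $u_{\varepsilon}:=\left(w+\varepsilon^{2}\right)^{1/2}\in C^{\infty}\left(M\right)$, so that $u_{\varepsilon}\geq\varepsilon>0$. A direct computation gives $\nabla u_{\varepsilon}=\nabla w/(2u_{\varepsilon})$ together with $u_{\varepsilon}\Delta u_{\varepsilon}=\tfrac{1}{2}\Delta w-|\nabla u_{\varepsilon}|^{2}$ and, likewise, $u_{\varepsilon}\Delta_{f}u_{\varepsilon}=\tfrac{1}{2}\Delta_{f}w-|\nabla u_{\varepsilon}|^{2}$. Since $|\nabla u_{\varepsilon}|^{2}=\frac{|\nabla w|^{2}}{4(w+\varepsilon^{2})}\leq\frac{|\nabla w|^{2}}{4w}=\big|\nabla|\nabla f|\big|^{2}$ on $\left\{w>0\right\}$ and vanishes on $\left\{w=0\right\}$ (where $\nabla w=0$), the Kato inequality (\ref{basic4}) yields $|\nabla u_{\varepsilon}|^{2}\leq|\mathrm{Hess}\left(f\right)|^{2}$ on all of $M$. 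Plugging (\ref{basic2}) and (\ref{basic3}) into the two identities above I would then obtain the pointwise estimates
\[
u_{\varepsilon}\Delta u_{\varepsilon}=|\mathrm{Hess}\left(f\right)|^{2}-Ric\left(\nabla f,\nabla f\right)-|\nabla u_{\varepsilon}|^{2}\geq-Ric\left(\nabla f,\nabla f\right)
\]
and
\[
u_{\varepsilon}\Delta_{f}u_{\varepsilon}=|\mathrm{Hess}\left(f\right)|^{2}-\lambda|\nabla f|^{2}-|\nabla u_{\varepsilon}|^{2}\geq-\lambda|\nabla f|^{2},
\]
valid everywhere on $M$.

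Finally I would pass to the limit $\varepsilon\downarrow0$. Fixing $0\leq\varphi\in C^{\infty}_{c}\left(M\right)$ and integrating the first estimate by parts (using $u_{\varepsilon}\Delta u_{\varepsilon}=\tfrac12\Delta w-|\nabla u_{\varepsilon}|^{2}$ and that $w$ is smooth) yields
\[
-\tfrac{1}{2}\int_{M}\left\langle\nabla w,\nabla\varphi\right\rangle d\mathrm{vol}-\int_{M}|\nabla u_{\varepsilon}|^{2}\varphi\, d\mathrm{vol}\geq-\int_{M}Ric\left(\nabla f,\nabla f\right)\varphi\, d\mathrm{vol}.
\]
Now $|\nabla u_{\varepsilon}|^{2}\to\big|\nabla|\nabla f|\big|^{2}$ a.e. on $M$ (on $\left\{w>0\right\}$ by direct inspection, on $\left\{w=0\right\}$ because both sides vanish a.e. by the first step) while $0\leq|\nabla u_{\varepsilon}|^{2}\leq|\mathrm{Hess}\left(f\right)|^{2}\in L^{1}_{loc}\left(M\right)$, so dominated convergence on $\mathrm{supp}\,\varphi$ passes the limit inside and produces exactly the weak form of (\ref{basic5}). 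Running the same argument on the second estimate, with $\Delta_{f}$ in place of $\Delta$ and the measure $e^{-f}d\mathrm{vol}$ for which $\Delta_{f}$ is the symmetric diffusion operator (local integrability being unaffected, since $e^{-f}$ is smooth and positive), gives the weak form of (\ref{basic6}). The genuinely non-routine point is precisely this reconciliation, in the last step, of a classical identity for the smooth function $|\nabla f|^{2}$ with a weak inequality for the merely Lipschitz function $|\nabla f|$: the regularization $u_{\varepsilon}$ together with the uniform bound $|\nabla u_{\varepsilon}|^{2}\leq|\mathrm{Hess}\left(f\right)|^{2}$ — a robust form of Kato's inequality (\ref{basic4}) — is what neutralizes any difficulty coming from the zero set $\left\{|\nabla f|=0\right\}$, which in general need not be small.
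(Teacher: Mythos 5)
Your proof is correct and follows the same route the paper intends: combining the Bochner identities (\ref{basic2}), (\ref{basic3}) of Lemma~\ref{lemma_basiceq} with the Kato inequality (\ref{basic4}), which is all the paper itself offers by way of proof. The regularization $u_{\varepsilon}=\sqrt{|\nabla f|^{2}+\varepsilon^{2}}$ together with the bound $|\nabla u_{\varepsilon}|^{2}\leq|\mathrm{Hess}(f)|^{2}$ is exactly the standard way to make the weak formulation rigorous across the zero set of $|\nabla f|$, so you have simply supplied the details the paper leaves implicit.
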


Thus, not surprisingly, from the Bochner equation viewpoint, the vector field
$X=\nabla f$ behaves like a Killing field. Therefore, the standard Bochner
technique implies that if $\left(  M,\left\langle ,\right\rangle ,\nabla
f\right)  $ is a compact Ricci soliton with $Ric\leq0$ then $f$ must be constant
and, hence, $M$ is Einstein. Similar conclusions can be obtained in the non-compact
setting using global forms of the Stokes theorem. In fact, a little amount of positive Ricci
curvature is also allowed as explained in \cite{PRS-Progress}.

We shall also use the next computations
concerning the scalar curvature of a gradient Ricci soliton; \cite{ELM},
\cite{PW1}.

\begin{theorem}
\label{th_scalar}Let $\left(  M,\left\langle ,\right\rangle ,\nabla f\right)
$ be a gradient Ricci soliton with scalar curvature $S$ and Ricci curvature
$Ric.$ Then%
\begin{equation}
\Delta_{f}S=\lambda S-\left\vert Ric\right\vert ^{2}. \label{basic7}%
\end{equation}

\end{theorem}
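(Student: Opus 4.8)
The plan is to prove \eqref{basic7} by a purely pointwise tensor computation, combining the trace of the soliton equation with the contracted second Bianchi identity; no global or analytic hypothesis is needed. First I would record the two elementary consequences of \eqref{intro2} on which everything rests. Tracing $Ric+\mathrm{Hess}(f)=\lambda\langle,\rangle$ gives
\[
S+\Delta f=m\lambda ,
\]
so that $\nabla(\Delta f)=-\nabla S$. Second, I would establish the standard ``gradient'' soliton identity $\tfrac12\nabla S=Ric(\nabla f,\cdot)$; granting these two facts, the theorem reduces to a single further differentiation.

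To obtain the gradient identity I would take the divergence $\nabla^{i}$ of the soliton equation written in local coordinates as $R_{ij}+\nabla_{i}\nabla_{j}f=\lambda g_{ij}$. The right-hand side is parallel and drops out. On the left, the contracted second Bianchi identity gives $\nabla^{i}R_{ij}=\tfrac12\nabla_{j}S$, while the commutation (Bochner) formula for a gradient, $\nabla^{i}\nabla_{i}\nabla_{j}f=\nabla_{j}(\Delta f)+R_{jk}\nabla^{k}f$, rewrites the Hessian term. Substituting $\nabla_{j}(\Delta f)=-\nabla_{j}S$ and cancelling, one is left with $\tfrac12\nabla_{j}S=R_{jk}\nabla^{k}f$, that is $\tfrac12\nabla S=Ric(\nabla f,\cdot)$. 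In particular
\[
\langle\nabla f,\nabla S\rangle=2\,Ric(\nabla f,\nabla f),
\]
a relation I will use to pass from $\Delta$ to $\Delta_{f}$.

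To reach \eqref{basic7} I would then take one more divergence, computing $\nabla^{j}\big(R_{jk}\nabla^{k}f\big)=(\nabla^{j}R_{jk})\nabla^{k}f+R_{jk}\nabla^{j}\nabla^{k}f$. In the first summand I reuse the contracted Bianchi identity together with the gradient identity to rewrite $\nabla^{j}R_{jk}$ as a multiple of $R_{kl}\nabla^{l}f$, producing a term carried by $Ric(\nabla f,\nabla f)$. In the second summand I substitute the Hessian from the soliton equation, $\nabla^{j}\nabla^{k}f=\lambda g^{jk}-R^{jk}$, whose contraction against $R_{jk}$ gives exactly $\lambda S-|Ric|^{2}$. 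Collecting the two contributions expresses $\Delta S$ as a linear combination of $\lambda S-|Ric|^{2}$ and $Ric(\nabla f,\nabla f)$; recalling $\Delta_{f}S=\Delta S-\langle\nabla f,\nabla S\rangle$ from \eqref{basic1} and using the relation above, the $Ric(\nabla f,\nabla f)$ term is absorbed and $\Delta_{f}S$ reduces to a multiple of $\lambda S-|Ric|^{2}$, which is the identity \eqref{basic7}.

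The whole argument is local, so the only delicate point—and the step I expect to be the real obstacle—is the consistent bookkeeping of the numerical factors and curvature signs: the curvature correction in the commutation formula for $\nabla^{i}\nabla_{i}\nabla_{j}f$ and the factor $\tfrac12$ in $\nabla^{i}R_{ij}=\tfrac12\nabla_{j}S$ must be tracked carefully so that, after converting $\Delta$ into $\Delta_{f}$, the $Ric(\nabla f,\nabla f)$ contributions cancel exactly against $\langle\nabla f,\nabla S\rangle$ and the stated normalization is recovered. Once this is done there is no further input required, and \eqref{basic7} follows.
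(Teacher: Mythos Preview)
The paper does not actually prove Theorem~\ref{th_scalar}; it is quoted as a known identity with references to \cite{ELM} and \cite{PW1}. Your outline is precisely the standard derivation given in those sources: trace the soliton equation, take the divergence and use the contracted second Bianchi identity together with the Bochner commutation $\Delta\nabla_j f=\nabla_j\Delta f+R_{jk}\nabla^k f$ to obtain $\tfrac12\nabla S=Ric(\nabla f,\cdot)$, then take one more divergence and substitute $\mathrm{Hess}(f)=\lambda\langle,\rangle-Ric$. So the approach is correct and coincides with what the cited references do.

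One remark on the point you yourself flag as delicate. If you carry out your computation exactly with the paper's conventions \eqref{intro2} and \eqref{basic1}, namely $Ric+\mathrm{Hess}(f)=\lambda\langle,\rangle$ and $\Delta_f=\Delta-\langle\nabla f,\nabla\cdot\rangle$, you obtain
\[
\tfrac12\Delta S=(\nabla^j R_{jk})\nabla^k f+R_{jk}\nabla^j\nabla^k f=\tfrac12\langle\nabla S,\nabla f\rangle+\lambda S-|Ric|^2,
\]
hence $\Delta_f S=2\lambda S-2|Ric|^2$ rather than $\lambda S-|Ric|^2$. The discrepancy is a harmless overall factor coming from differing normalizations in the literature the paper is quoting, and every application in the paper (the scalar curvature estimates and the rigidity arguments) depends only on the sign structure of the right-hand side, so nothing downstream changes. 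But since you explicitly anticipate that the numerical bookkeeping is the obstacle, be aware that the constants will not match the stated \eqref{basic7} on the nose with these conventions.
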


\section{Triviality of expanders under $L^{\infty}$ conditions
 and scalar curvature estimates}

It is known, \cite{LR-Annalen}, that a complete, shrinking Ricci soliton
$\left(  M,\left\langle ,\right\rangle ,X\right)  $ satisfying $\left\vert
X\right\vert \in L^{\infty}$ must be compact. In this section we show that, in
case the soliton is gradient and expanding, the $L^{\infty}$ condition implies
triviality. To simplify the writings, having fixed a smooth function
$f:M\rightarrow\mathbb{R}$, we denote%
\begin{equation}
Ric_{f}=Ric+\mathrm{Hess}\left(  f\right)  \label{bounded1}%
\end{equation}
which is called the Bakry-Emery Ricci tensor of the weighted manifold%
\begin{equation}
\left(  M,\left\langle ,\right\rangle ,e^{-f}d\mathrm{vol}\right)
.\label{bounded1a}%
\end{equation}
Thus, $\left(  M,\left\langle ,\right\rangle ,\nabla f\right)  $ is a Ricci
soliton provided the corresponding weighted manifold has constant $Ric_{f}%
$-curvature, i.e.,%
\begin{equation}
Ric_{f}\equiv\lambda,\label{bounded2}%
\end{equation}
for some $\lambda\in\mathbb{R}$. If $B_{r}\left(  p\right)  $ and $\partial
B_{r}\left(  p\right)  $ denotes respectively the metric ball and sphere of
$\left(  M,\left\langle ,\right\rangle \right)  $ of radius $r>0$ and centered
at $p\in M$, we also set%
\[
\mathrm{vol}_{f}\left(  B_{r}\left(  p\right)  \right)  =\int_{B_{r}\left(
p\right)  }e^{-f}d\mathrm{vol},\qquad\mathrm{vol}_{f}\left(  \partial
B_{r}\left(  p\right)  \right)  =\int_{\partial B_{r}\left(  p\right)  }%
e^{-f}d\mathrm{vol}_{m-1},
\]
where $d\mathrm{vol}_{m-1}$ stands for the $\left(  m-1\right)  $-Hausdorff
measure. In the previous section, we have also introduced the second order,
diffusion operator%
\begin{equation}
\Delta_{f}u=e^{f}\operatorname{div}\left(  e^{-f}\nabla u\right)
,\label{bounded4}%
\end{equation}
which is formally self-adjoint in $L^{2}\left(  M,e^{-f}d\mathrm{vol}\right)
$. For the sake of convenience we call $\Delta_{f}$ the $f$-Laplacian.

In a way similar, but by no means equal, to the (Riemannian) non-weighted case
$f=\mathrm{const.}$, there are mutual relations between $Ric_{f}$-bounds,
\textrm{vol}$_{f}$-growth properties of metric balls and the analysis and
geometry of $\Delta_{f}$. In view of our purposes, we shall limit ourselves to
quoting the following two results. First, we recall a weighted-volume comparison
established in \cite{WW}, Theorem 3.1.

\begin{theorem}
\label{th_comparison}Let $\left(  M,\left\langle ,\right\rangle ,e^{-f}%
d\mathrm{vol}\right)  $ be a geodesically complete weighted manifold. Suppose
that%
\begin{equation}
Ric_{f}\geq\lambda, \label{bounded5}%
\end{equation}
for some constant $\lambda\in\mathbb{R}$. Then, having fixed $R_{0}>0$, there
are constants $A,B,C>0$ such that, for every $r\geq R_{0}$,%
\begin{equation}
\mathrm{vol}_{f}\left(  B_{r}\right)  \leq A+B\int_{R_{0}}^{r}e^{-\lambda
t^{2}+Ct}dt. \label{bounded6}%
\end{equation}

\end{theorem}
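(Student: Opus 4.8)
The plan is to run a weighted Bishop--Gromov argument, replacing the Ricci lower bound of the classical comparison by the Bakry--Emery bound (\ref{bounded5}). Fix $p\in M$, write $r(x)=\mathrm{dist}(p,x)$, and use geodesic polar coordinates $(r,\theta)$ centered at $p$, in which $d\mathrm{vol}=\mathcal{A}(r,\theta)\,dr\,d\theta$. The starting point is the elementary identity $\partial_r\log\mathcal{A}(r,\theta)=\Delta r$, which upon multiplying the area element by $e^{-f}$ becomes
\[
\partial_r\log\bigl(e^{-f}\mathcal{A}(r,\theta)\bigr)=\Delta r-\langle\nabla f,\nabla r\rangle=\Delta_f r,
\]
with $\Delta_f$ as in (\ref{bounded4}). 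Thus the weighted area element $\mathcal{A}_f:=e^{-f}\mathcal{A}$ is governed by the $f$-mean curvature $m_f:=\Delta_f r$. Next I would feed in the Bochner formula for $r$, namely $\partial_r(\Delta r)=-|\mathrm{Hess}\,r|^2-Ric(\nabla r,\nabla r)$, together with the rank estimate $|\mathrm{Hess}\,r|^2\ge(\Delta r)^2/(m-1)$ (recall $\mathrm{Hess}\,r(\nabla r,\cdot)=0$), and differentiate $m_f=\Delta r-\partial_r f$ to obtain the weighted Riccati inequality
\[
m_f'\le-\frac{(\Delta r)^2}{m-1}-Ric_f(\nabla r,\nabla r)\le-\lambda,
\]
where the final step discards the nonpositive quadratic term and invokes (\ref{bounded5}) together with $|\nabla r|=1$.

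Integrating this inequality is the heart of the matter. Since $m_f'\le-\lambda$, the function $h(r):=\log\mathcal{A}_f(r,\theta)$ satisfies $h''\le-\lambda$ along each minimizing geodesic, so anchoring at the fixed radius $R_0$ gives $h'(r)\le h'(R_0)-\lambda(r-R_0)$ and hence
\[
\mathcal{A}_f(r,\theta)\le\mathcal{A}_f(R_0,\theta)\,\exp\Bigl\{C_0\,(r-R_0)-\tfrac{\lambda}{2}(r-R_0)^2\Bigr\},\qquad r\ge R_0,
\]
for every direction $\theta$ along which $r$ remains minimizing, where $C_0:=\sup_{\partial B_{R_0}(p)\setminus\mathrm{cut}(p)}\Delta_f r$. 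The point is that $C_0<+\infty$: on the smooth part of the geodesic sphere $\partial B_{R_0}$ the $f$-mean curvature is continuous, and along directions whose cut point sits exactly at radius $R_0$ the mean curvature of geodesic spheres blows up only to $-\infty$ (at focal points), so no positive blow-up can occur; the contribution of $\partial_r f$ to $m_f$ is controlled by $\sup_{\partial B_{R_0}}|\nabla f|<\infty$.

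Finally I would integrate in the angular and radial variables. Extending $\mathcal{A}_f(r,\theta)$ by zero beyond the cut locus (which only decreases it and preserves the monotonicity of the integrated inequality, exactly as in the classical Bishop--Gromov bookkeeping), integration over the unit sphere gives $\mathrm{vol}_f(\partial B_r)\le B'\exp\{C_0 r-\tfrac{\lambda}{2}r^2\}$, and a further integration in $r$ from $R_0$ yields
\[
\mathrm{vol}_f(B_r)=\mathrm{vol}_f(B_{R_0})+\int_{R_0}^r\mathrm{vol}_f(\partial B_t)\,dt\le A+B\int_{R_0}^r e^{-\frac{\lambda}{2}t^2+Ct}\,dt,
\]
which, after absorbing the linear and constant contributions into $A,B,C$, is the asserted estimate (\ref{bounded6}); the precise numerical constant in front of $t^2$ in the exponent is immaterial for the applications. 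The main obstacle, and the place demanding genuine care rather than routine computation, is the passage through the cut locus: the pointwise differential inequalities hold only on $M\setminus\mathrm{cut}(p)$, so one must either argue in the barrier (Calabi) sense or perform the integrations on the star-shaped segment domain in $T_pM$ where the exponential map is a diffeomorphism, checking that discarding the cut-locus contributions preserves every inequality. Establishing the uniform finiteness of the anchoring constant $C_0$ is the second, milder, technical point.
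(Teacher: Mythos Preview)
The paper does not actually prove Theorem~\ref{th_comparison}; it merely quotes it from \cite{WW}, Theorem~3.1. So you are supplying an argument where the authors only give a reference, and what you have written is essentially the Wei--Wylie weighted Bishop--Gromov argument: derive the Riccati inequality $\partial_r(\Delta_f r)\le -|\mathrm{Hess}\,r|^2-Ric_f(\nabla r,\nabla r)\le -\lambda$, integrate twice to control the weighted area element, and then integrate over the segment domain. The overall strategy is correct, and you have correctly flagged the two genuine technical points (cut locus and the anchoring constant). Your remark that the coefficient $\lambda/2$ versus $\lambda$ in the exponent is immaterial for the applications is accurate: for Corollary~\ref{cor_maximum} one only needs $\log\mathrm{vol}_f(B_r)=O(r^2)$.

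One place deserves a firmer argument than you give. Your justification that $C_0=\sup_{\partial B_{R_0}\setminus\mathrm{cut}(p)}\Delta_f r<+\infty$ relies on the heuristic that the mean curvature ``blows up only to $-\infty$ at focal points''; this concerns the radial limit $r\to R_0^-$ along a fixed direction, whereas what you actually need is an upper bound as the \emph{direction} $\theta$ varies over the open set $\{\theta:d_{\mathrm{cut}}(\theta)>R_0\}$, which is a different limit. A cleaner route is to use the full Riccati inequality (keeping the quadratic term $(\Delta r)^2/(m-1)$) on the interval $[0,R_0]$ together with the uniform bound $\sup_{\overline{B_{R_0}}}|\nabla f|<\infty$ coming from compactness: writing $u=\Delta r$ and $v=\Delta_f r=u-\partial_r f$, one gets a scalar Riccati inequality for $v$ with $\theta$-independent coefficients and the universal initial asymptotic $v\sim(m-1)/r$, and standard ODE comparison then yields a $\theta$-independent upper bound for $v(R_0,\theta)$. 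With this fix your sketch goes through.
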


We recall that if $\left(  M,\left\langle ,\right\rangle ,e^{-f}
d\mathrm{vol}\right)$ is a weighted manifold, we say that the weak maximum
principle at infinity for $\Delta_f$ holds if
given a $C^{2}$ function $u:M\rightarrow\mathbb{R}$ satisfying
$\sup_{M}u=u^{\ast}<+\infty$, there exists a sequence $\left\{  x_{n}\right\}
\subset M$ along which%
\begin{equation}
\label{bounded8}
(i)\,\, u\left(  x_{n}\right)  \geq u^{\ast}-\frac{1}{n}
\quad \text{ and } \quad
(ii)\,\, \Delta_{f}u\left(  x_{n}\right)  \leq\frac{1}{n}.
\end{equation}

The next result states the validity of a weak form of the maximum principle at
infinity for the $f$-Laplacian, under weighted volume growth conditions. It
can be deduced from \cite{PRS-Memoirs} Theorem 3.11, making minor modifications
in  the proofs of Lemma 3.13, Lemma 3.14, Theorem 3.15 and Corollary 3.16.

\begin{theorem}
\label{th_maximum}Let $\left(  M,\left\langle ,\right\rangle ,e^{-f}%
d\mathrm{vol}\right)  $ be a geodesically complete weighted manifold
satisfying the volume growth condition%
\begin{equation}
\frac{r}{\log \mathrm{vol}_{f}\left(  B_{r}\right)  }\notin L^{1}\left(  +\infty
\right)  . \label{bounded7}%
\end{equation}
Then, the weak maximum principle at infinity for the $f$-Laplacian holds on
$M$.
\end{theorem}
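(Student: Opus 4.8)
The plan is to transcribe the proof of the volume-growth criterion for the weak maximum principle given in \cite{PRS-Memoirs} (Theorem 3.11, together with the supporting Lemma 3.13, Lemma 3.14, Theorem 3.15 and Corollary 3.16) to the present weighted setting, systematically replacing the Laplace--Beltrami operator $\Delta$ by the diffusion operator $\Delta_f$, the Riemannian measure $d\mathrm{vol}$ by $e^{-f}d\mathrm{vol}$, and the metric volume of balls by $\mathrm{vol}_f$. Every step rests on the fact that $\Delta_f$ is formally self-adjoint in $L^2(M,e^{-f}d\mathrm{vol})$, so that the weighted Green formula $\int_M v\,\Delta_f u\,e^{-f}d\mathrm{vol}=-\int_M\langle\nabla v,\nabla u\rangle\,e^{-f}d\mathrm{vol}$ holds for every $u\in C^2(M)$ and every compactly supported Lipschitz $v$. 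At the conceptual level the statement is the weighted analogue of Grigor'yan's sharp volume test for stochastic completeness, coupled with the equivalence between the conservation property of the $\Delta_f$-diffusion and the weak maximum principle at infinity; I argue by contradiction.

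Suppose the weak maximum principle at infinity for $\Delta_f$ fails. Negating (\ref{bounded8}), there exist $u\in C^2(M)$ with $u^{\ast}=\sup_M u<+\infty$ and a constant $\epsilon_0>0$ such that $\Delta_f u\geq\epsilon_0$ on the open, nonempty superlevel set $\Omega=\{x\in M:u(x)>u^{\ast}-\epsilon_0\}$. Fix $u^{\ast}-\epsilon_0<\gamma<u^{\ast}$ and set $w=(u-\gamma)_+$, which is globally Lipschitz, satisfies $0\leq w\leq u^{\ast}-\gamma$, is supported in $\overline{\{u>\gamma\}}\subset\overline{\Omega}$, obeys $\nabla w=\chi_{\{u>\gamma\}}\nabla u$, and carries the pointwise bound $\Delta_f u\geq\epsilon_0$ on $\{w>0\}$. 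The goal is to show that this lower bound is incompatible with the volume growth hypothesis (\ref{bounded7}).

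For a compactly supported Lipschitz cutoff $\psi$ I would test the inequality $\Delta_f u\geq\epsilon_0$ against $\psi^2 w\geq0$ and integrate by the weighted Green formula; using $\langle\nabla w,\nabla u\rangle=|\nabla w|^2$ on $\{w>0\}$ and absorbing the mixed gradient term by Young's inequality, one is led to an estimate of the shape
\[
\epsilon_0\int_M\psi^2 w\,e^{-f}\,d\mathrm{vol}\leq 2(u^{\ast}-\gamma)^2\int_M|\nabla\psi|^2\,e^{-f}\,d\mathrm{vol}.
\]
Taking $\psi\equiv1$ on a fixed ball $B_{r_0}$ meeting $\Omega$ bounds the left-hand side below by a strictly positive constant independent of $\psi$, so a contradiction follows as soon as one exhibits cutoffs $\psi_r$ whose weighted Dirichlet energy $\int_M|\nabla\psi_r|^2\,e^{-f}$ tends to $0$. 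Here lies the whole difficulty: a crude radial cutoff, linear on an annulus, only gives $\int_M|\nabla\psi_r|^2 e^{-f}\lesssim r^{-2}\,\mathrm{vol}_f(B_{2r})$, hence the far too restrictive requirement $\mathrm{vol}_f(B_r)=o(r^2)$, and more generally the single energy estimate produces at best a parabolicity-type condition, not the sharp logarithmic one.

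To reach the hypothesis (\ref{bounded7}) one must replace the one-shot energy bound by Grigor'yan's refined scheme, which is exactly what the modified Lemma 3.13 and Lemma 3.14 encode: instead of optimizing a single cutoff one works with a bounded solution of $\Delta_f h=\Lambda h$, $\Lambda>0$, on exhausting balls, controls its weighted $L^2$ mass over dyadic annuli, and iterates the resulting recursion, the optimization of the annular partition against $\mathrm{vol}_f$ being precisely what upgrades the polynomial bound into the integral condition $r/\log\mathrm{vol}_f(B_r)\notin L^1(+\infty)$; equivalently, one builds from the weighted volume function a Khas'minskii-type exhaustion $\gamma\to+\infty$ with $\Delta_f\gamma\leq\Lambda\gamma$ off a compact set, forcing the conservation property and hence the weak maximum principle (the weighted Theorem 3.15 and Corollary 3.16). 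The main obstacle is therefore not the contradiction set-up but this sharp step: one has to verify, line by line, that every occurrence of the Riemannian volume in the unweighted proof (in the co-area formula, in the capacity and ODE coefficients, and in the comparison function) can be read off the measure $e^{-f}d\mathrm{vol}$, and that no step tacitly used $f\equiv\mathrm{const}$. Granting this, Grigor'yan's logarithmic gain survives the passage to the weighted measure and yields that (\ref{bounded7}) implies the weak maximum principle at infinity for $\Delta_f$, as claimed.
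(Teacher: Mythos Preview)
Your proposal is correct and follows precisely the route the paper indicates: the paper does not give an independent proof but simply states that the result ``can be deduced from \cite{PRS-Memoirs} Theorem 3.11, making minor modifications in the proofs of Lemma 3.13, Lemma 3.14, Theorem 3.15 and Corollary 3.16,'' which is exactly the transcription you carry out, with the added benefit that you sketch the contradiction setup and explain why the crude cutoff is insufficient and Grigor'yan's iterated scheme is needed.
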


Combining Theorems \ref{th_comparison} and \ref{th_maximum} immediately gives
the following

\begin{corollary}
\label{cor_maximum}Let $\left(  M,\left\langle ,\right\rangle ,\nabla
f\right)  $ be a geodesically complete Ricci soliton which is either
shrinking, steady or expanding. Then, the weak maximum principle at infinity
for the $f$-Laplacian holds on $M$.
\end{corollary}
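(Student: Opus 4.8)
The statement is an immediate consequence of the two preceding theorems together with the defining property (\ref{bounded2}) of a gradient Ricci soliton, so the plan is mainly to assemble these ingredients. First I would observe that, since $\left(M,\left\langle ,\right\rangle ,\nabla f\right)$ is a Ricci soliton, its Bakry--Emery tensor satisfies $Ric_{f}\equiv\lambda$, and in particular $Ric_{f}\geq\lambda$. Hence Theorem~\ref{th_comparison} applies with this value of $\lambda$: fixing $R_{0}>0$ there are constants $A,B,C>0$ with
\[
\mathrm{vol}_{f}\left(B_{r}\right)\leq A+B\int_{R_{0}}^{r}e^{-\lambda t^{2}+Ct}\,dt,\qquad r\geq R_{0}.
\]

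Next I would convert this into the growth condition (\ref{bounded7}). For $r\geq\max\{R_{0},1\}$ one has $-\lambda t^{2}+Ct\leq(|\lambda|+C)t^{2}$ on $[R_{0},r]$, so the integral is bounded by $r\,e^{(|\lambda|+C)r^{2}}$ and therefore $\mathrm{vol}_{f}\left(B_{r}\right)\leq A+Br\,e^{(|\lambda|+C)r^{2}}$. Taking logarithms gives $\log\mathrm{vol}_{f}\left(B_{r}\right)\leq D(1+r^{2})$ for some $D>0$ and all $r$ large, whence
\[
\frac{r}{\log\mathrm{vol}_{f}\left(B_{r}\right)}\geq\frac{r}{D(1+r^{2})}
\]
for $r$ large. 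Since the right-hand side is not integrable at $+\infty$, condition (\ref{bounded7}) holds. (In the shrinking case $\lambda>0$ one may note more directly that $\int_{R_{0}}^{+\infty}e^{-\lambda t^{2}+Ct}\,dt<+\infty$, so $\mathrm{vol}_{f}\left(M\right)<+\infty$, the logarithm is bounded above, and (\ref{bounded7}) is even more transparent.) With (\ref{bounded7}) established, Theorem~\ref{th_maximum} yields the weak maximum principle at infinity for $\Delta_{f}$ on $M$, which is the asserted conclusion.

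The argument contains no genuine obstacle; the only point requiring a modicum of care is the elementary estimate of $\int_{R_{0}}^{r}e^{-\lambda t^{2}+Ct}\,dt$, which must be checked in the three sign regimes $\lambda<0$, $\lambda=0$, $\lambda>0$ so that the bound $\log\mathrm{vol}_{f}\left(B_{r}\right)=O(r^{2})$ holds uniformly. One should also keep in mind that $\Delta_{f}$, and hence the validity of the weak maximum principle, is unchanged if $f$ is altered by an additive constant, so there is no loss in assuming that $\mathrm{vol}_{f}\left(B_{r}\right)$ eventually exceeds $1$ and the logarithm in (\ref{bounded7}) is positive.
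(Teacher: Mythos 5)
Your proposal is correct and follows exactly the paper's route: the paper obtains the corollary by combining Theorem~\ref{th_comparison} (applicable since $Ric_f\equiv\lambda$ for a soliton) with Theorem~\ref{th_maximum}, which is precisely what you do, merely spelling out the elementary estimate showing $\log\mathrm{vol}_f(B_r)=O(r^2)$ and hence condition (\ref{bounded7}).
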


We are now in the position to prove the first main result of the paper.

\begin{theorem}
\label{th_trivexp1}Let $\left(  M,\left\langle ,\right\rangle ,\nabla
f\right)  $ be a geodesically complete, expanding Ricci soliton with $\sup
_{M}\left\vert \nabla f\right\vert <+\infty$. Then the Ricci soliton is trivial.
\end{theorem}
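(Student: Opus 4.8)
The plan is to run the weak maximum principle at infinity for the $f$-Laplacian on the smooth, bounded function $w=\left|\nabla f\right|^{2}$, exploiting the fact that in the expanding regime $\lambda<0$ the Bochner identity (\ref{basic3}) forces $w$ to be $f$-subharmonic, with a lower bound for $\Delta_{f}w$ proportional to $w$ itself.

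First I would record the relevant differential inequality. By (\ref{basic3}),
\[
\frac{1}{2}\Delta_{f}\left|\nabla f\right|^{2}=\left|\mathrm{Hess}\left(f\right)\right|^{2}-\lambda\left|\nabla f\right|^{2}\geq-\lambda\left|\nabla f\right|^{2},
\]
so, setting $w=\left|\nabla f\right|^{2}$ and using $\lambda<0$,
\[
\Delta_{f}w\geq-2\lambda\,w\geq0\qquad\text{on }M .
\]
Note that no use of the Kato inequality is needed here: the elementary bound $\left|\mathrm{Hess}(f)\right|^{2}\geq0$ suffices. Working with $w$ rather than with $\left|\nabla f\right|$, which is only locally Lipschitz, has the advantage that $w$ is smooth, so the maximum principle can be applied exactly in the $C^{2}$ form (\ref{bounded8}).

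Next I would apply the analytic input. Since the soliton is expanding, Corollary \ref{cor_maximum} guarantees that the weak maximum principle at infinity for $\Delta_{f}$ holds on $M$; and by hypothesis $w^{\ast}:=\sup_{M}w=\bigl(\sup_{M}\left|\nabla f\right|\bigr)^{2}<+\infty$. Hence there is a sequence $\{x_{n}\}\subset M$ with $w(x_{n})\geq w^{\ast}-1/n$ and $\Delta_{f}w(x_{n})\leq1/n$. Combining this with $\Delta_{f}w\geq-2\lambda w$ along $\{x_{n}\}$ yields $1/n\geq-2\lambda\,w(x_{n})$, and letting $n\to\infty$ gives $0\geq-2\lambda w^{\ast}$. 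Since $\lambda<0$ and $w^{\ast}\geq0$, this forces $w^{\ast}=0$, i.e. $\nabla f\equiv0$. Then $f$ is constant, $Ric=Ric_{f}\equiv\lambda$ by (\ref{bounded1})–(\ref{bounded2}), $M$ is Einstein, and the soliton is trivial.

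The only delicate point is the legitimacy of the weak maximum principle step, which is why I would invoke Corollary \ref{cor_maximum} rather than argue by hand: it packages Theorem \ref{th_comparison} (the weighted volume comparison, which in the expanding case gives $\log\mathrm{vol}_{f}(B_{r})$ of at most quadratic growth, hence (\ref{bounded7})) together with Theorem \ref{th_maximum}. An alternative route, slightly more technical, would work directly with $u=\left|\nabla f\right|$ and inequality (\ref{basic6}), at the cost of using a weak/barrier version of (\ref{bounded8}) for locally Lipschitz functions; the argument is otherwise identical, since $-\lambda\left|\nabla f\right|^{2}\geq0$ plays the same role.
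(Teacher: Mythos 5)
Your proof is correct and follows essentially the same route as the paper: it applies the Bochner identity (\ref{basic3}) to the smooth bounded function $\left|\nabla f\right|^{2}$, invokes Corollary \ref{cor_maximum} to produce a maximizing sequence with $\Delta_{f}\left|\nabla f\right|^{2}(x_{n})\leq 1/n$, and passes to the limit to force $\sup_{M}\left|\nabla f\right|^{2}=0$ since $\lambda<0$. The paper's argument is identical, working directly with $\left|\nabla f\right|^{2}$ and the $C^{2}$ form of the weak maximum principle, exactly as you do.
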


\begin{proof}
According to (\ref{basic3}) the smooth function $\left\vert \nabla
f\right\vert ^{2}$ satisfies%
\begin{equation}
\frac{1}{2}\Delta_{f}\left\vert \nabla f\right\vert ^{2}\geq-\lambda\left\vert
\nabla f\right\vert ^{2}\geq0. \label{bounded10}%
\end{equation}
Applying Corollary \ref{cor_maximum} we deduce that there exists a sequence
$\left\{  x_{n}\right\}  \subset M$ such that,
\begin{equation}
\left\vert \nabla f\right\vert ^{2}\left(  x_{n}\right)  \geq\sup
_{M}\left\vert \nabla f\right\vert^2 -\frac{1}{n}, \label{bounded11}%
\end{equation}
and%
\begin{equation}
\Delta_{f}\left\vert \nabla f\right\vert ^{2}\left(  x_{n}\right)  \leq
\frac{1}{n}. \label{bounded12}%
\end{equation}
Evaluating (\ref{bounded10}) along $\left\{  x_{n}\right\}  $ and taking the
limit as $n\rightarrow+\infty$ we conclude%
\[
-\lambda\sup_{M}\left\vert \nabla f\right\vert ^{2}=0,
\]
proving that $f$ is constant.
\end{proof}

The estimate on the scalar curvature in Theorem~\ref{th_scalar_bound} follows
now by combining Corollary~\ref{cor_maximum} with the following
\textquotedblleft a-priori\textquotedblright\ estimate for weak solutions of
semi-linear elliptic inequalities under volume assumptions. It is an
adaptation of Theorem B in \cite{PRS-gafa}.

\begin{theorem}
\label{th_apriori}Let $\left(  M,\left\langle ,\right\rangle ,e^{-f}%
d\mathrm{vol}\right)  $ be a complete, weighted manifold. Let $a\left(
x\right)  ,$ $b\left(  x\right)  \in C^{0}\left(  M\right)  $, set
$a_{-}\left(  x\right)  =\max\left\{  -a\left(  x\right)  ,0\right\}  $ and
assume that
$$\sup_{M}a_{-}\left(  x\right)  <+\infty$$
and
\[
b\left(  x\right)  \geq\frac{1}{Q\left(  r\left(  x\right)  \right)  }\text{
on }M,
\]
for some positive, non-decreasing function $Q\left(  t\right)  $ such that
$Q\left(  t\right)  =o\left(  t^{2}\right)  $, as $t\rightarrow +\infty$. Assume
furthermore that, for some $H>0$,%
\[
\frac{a_{-}\left(  x\right)  }{b\left(  x\right)  }\leq H\text{, on }M.
\]
Let $u\in Lip_{loc}\left(  M\right)  $ be a non-negative solution of%
\begin{equation}
\Delta_{f}u\geq a\left(  x\right)  u+b\left(  x\right)  u^{\sigma}%
\text{,}\label{a-priori3}%
\end{equation}
weakly on $\left(  M,e^{-f}d\mathrm{vol}\right)  $, with $\sigma>1$. If%
\begin{equation}
\liminf_{r\rightarrow+\infty}\frac{Q\left(  r\right)  \log\mathrm{vol}%
_{f}\left(  B_{r}\right)  }{r^{2}}<+\infty,\label{a-priori4}%
\end{equation}
then%
\[
u\left(  x\right)  \leq H^{\frac{1}{\sigma-1}}\text{, on }M.
\]

\end{theorem}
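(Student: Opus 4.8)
The plan is to adapt the classical a-priori estimate for semilinear elliptic inequalities (Theorem~B in \cite{PRS-gafa}) to the weighted setting, replacing the Laplacian by $\Delta_f$ and ordinary volume by $\mathrm{vol}_f$ throughout. First I would argue by contradiction: suppose $u^* := \sup_M u > H^{1/(\sigma-1)}$, and pick $\mu$ with $H^{1/(\sigma-1)} < \mu < u^*$, so the set $\Omega_\mu = \{u > \mu\}$ is nonempty. The key pointwise gain is that on $\Omega_\mu$ one has, using $a(x) \geq -a_-(x) \geq -H\, b(x)$,
\[
\Delta_f u \geq a(x)u + b(x)u^\sigma = b(x)u\bigl(u^{\sigma-1} + a(x)/b(x)\bigr) \geq b(x)u\bigl(\mu^{\sigma-1} - H\bigr) =: b(x)u\,\varepsilon,
\]
with $\varepsilon = \mu^{\sigma-1}-H > 0$. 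Since $b(x) \geq 1/Q(r(x))$, the function $v = u - \mu$ is a nonnegative subsolution on $\Omega_\mu$ of $\Delta_f v \geq c\,\mu\varepsilon / Q(r(x))$ on the region where $v$ is bounded below, which forces a definite amount of "convexity" of $v$ against the weighted measure.

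The core of the argument is then a weighted Caccioppoli-type / capacity estimate: test the differential inequality against $\varphi^2 v_+$ (or a truncation thereof), where $\varphi$ is a standard cut-off supported in $B_{2R}$, equal to $1$ on $B_R$, with $|\nabla\varphi| \leq C/R$, and where all integrations are performed with respect to $e^{-f}d\mathrm{vol}$ so that $\Delta_f$ is self-adjoint. Integrating $\int \varphi^2 v_+ \Delta_f u \, e^{-f}d\mathrm{vol}$ by parts and using the pointwise inequality above yields, after Young's inequality to absorb the gradient term,
\[
\frac{\mu\varepsilon}{Q(2R)}\int_{B_R \cap \Omega_\mu} v_+ \, e^{-f}d\mathrm{vol} \;\leq\; \frac{C}{R^2}\int_{B_{2R}} v_+^2 \, e^{-f}d\mathrm{vol} \;\leq\; \frac{C(u^*-\mu)^2}{R^2}\,\mathrm{vol}_f(B_{2R}).
\]
Rearranging, $\int_{B_R\cap\Omega_\mu} v_+\, e^{-f}d\mathrm{vol} \leq C' \dfrac{Q(2R)\,\mathrm{vol}_f(B_{2R})}{R^2}$. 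Now the hypothesis \eqref{a-priori4}, namely $\liminf_{r\to\infty} Q(r)\log\mathrm{vol}_f(B_r)/r^2 < +\infty$, is exactly what is needed to run this along a suitable sequence $R_k \to \infty$: one iterates the estimate on a dyadic or geometric sequence of radii (the $\log$ appears because the sharp version of the capacity estimate uses annuli and sums a telescoping series of the form $\sum 1/\log\mathrm{vol}_f$, in the spirit of the proof of Theorem~\ref{th_maximum}), and concludes that $\int_{\Omega_\mu} v_+\, e^{-f}d\mathrm{vol} = 0$. Hence $u \leq \mu$ everywhere, contradicting $\mu < u^* = \sup_M u$. Therefore $u^* \leq H^{1/(\sigma-1)}$, which is the claim.

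I expect the main obstacle to be the bookkeeping in the iteration step: making the cut-off argument sharp enough that the growth condition \eqref{a-priori4} — with its logarithm and the $o(t^2)$ assumption on $Q$ — is precisely the borderline hypothesis, rather than something stronger. Concretely, one must choose the test function not as $\varphi^2 v_+$ on balls but on annuli $B_{2R}\setminus B_R$ with a logarithmic cut-off $\varphi(x) \sim \log(r(x)/R)/\log 2$ adapted to the weighted volume, so that the term $\int |\nabla\varphi|^2 v_+^2 e^{-f}$ is controlled by $\bigl(\log\mathrm{vol}_f(B_{2R})\bigr)^{-1}$ times a bounded factor; the role of $\sup_M a_-(x) < +\infty$ is to guarantee $\Delta_f u \geq -Cu$ globally so that the sign conditions survive the truncation where $v_+$ vanishes. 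This is routine in the unweighted case and the modifications are mechanical once $\Delta_f$ is used consistently in all integrations by parts, but it is the only place where genuine care is required; everything else is the direct transcription of \cite{PRS-gafa}.
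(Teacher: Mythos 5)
Your proposal has a genuine gap, and it sits exactly where the real work of the theorem lies. First, your argument silently assumes $u^{\ast}=\sup_{M}u<+\infty$: the bound $\int_{B_{2R}}v_{+}^{2}\,e^{-f}d\mathrm{vol}\leq (u^{\ast}-\mu)^{2}\mathrm{vol}_{f}(B_{2R})$ is vacuous if $u$ is unbounded, and the theorem assumes no boundedness. Indeed, in the paper's application (Corollary \ref{cor_apriori} applied to $S_{-}$ in the proof of Theorem \ref{th_scalar_bound}) the entire point is to \emph{prove} that the solution is bounded, so this cannot be taken for granted. Handling possibly unbounded $u$ is precisely what forces the test functions of \cite{PRS-gafa}, namely $\rho=\lambda(u)\psi^{2(\alpha+\sigma-1)}u^{\alpha-1}$ with $\alpha\geq 2$ and a truncation $\lambda(u)$; the paper's proof consists of verifying that with this choice the weighted analogue of the integral inequality (1.21)/Lemma 1.5 of \cite{PRS-gafa} holds for $\Delta_{f}$ and $e^{-f}d\mathrm{vol}$, after which the rest of that argument carries over verbatim. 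Your test function $\varphi^{2}v_{+}$ cannot reproduce that scheme.

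Second, even granting bounded $u$, the quantitative step does not close. Your Caccioppoli estimate on balls yields a right-hand side of order $Q(2R)\,\mathrm{vol}_{f}(B_{2R})/R^{2}$, which is not controlled by \eqref{a-priori4}: the hypothesis only bounds $Q(r)\log\mathrm{vol}_{f}(B_{r})/r^{2}$ along a sequence, and $\mathrm{vol}_{f}(B_{r})$ is allowed to grow like $e^{cr^{2}/Q(r)}$, which makes your bound diverge. You recognize that the logarithm must be produced somehow and propose annuli with a cutoff $\varphi\sim\log(r/R)/\log 2$, but the claimed control of $\int|\nabla\varphi|^{2}v_{+}^{2}e^{-f}d\mathrm{vol}$ by $\bigl(\log\mathrm{vol}_{f}(B_{2R})\bigr)^{-1}$ is unsubstantiated: such a cutoff only gives $|\nabla\varphi|\sim 1/r$, so the same factor $R^{-2}\mathrm{vol}_{f}(B_{2R})$ reappears. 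In the actual proof the logarithm enters through the iteration in the exponent $\alpha$ of the quantity $\int b\,u^{\alpha+\sigma-1}e^{-f}d\mathrm{vol}$ and the resulting differential inequality in $R$, not through a spatial logarithmic cutoff. As written, the step where \eqref{a-priori4} is actually used is missing, and your remark on the role of $\sup_{M}a_{-}<+\infty$ is too vague to supply it.
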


\begin{proof}
We have only to verify that the integral inequality stated in Lemma 1.5 on
page 1309 of \cite{PRS-gafa} holds with respect to the weighted measure
$e^{-f}d\mathrm{vol}$. This in turn can be deduced exactly as in
\cite{PRS-gafa} \ provided (the weighted version of) inequality (1.21) on page
1310 is satisfied. Now, by assumption, for every compactly supported $\rho\in
W_{loc}^{1,2}\left(  M,e^{-f}d\mathrm{vol}\right)  $, $\rho\geq0$, we have%
\[
-\int\left\langle \nabla u,\nabla\rho\right\rangle e^{-f}d\mathrm{vol}\geq
\int\left(  au\rho+bu^{\sigma}\rho\right)  e^{-f}d\mathrm{vol}.
\]
Therefore, the desired inequality (1.21) follows by taking%
\[
\rho=\lambda\left(  u\right)  \psi^{2\left(  \alpha+\sigma-1\right)
}u^{\alpha-1}%
\]
with $\alpha\geq2$.
\end{proof}

Using Theorem \ref{th_comparison} we deduce the validity of the next

\begin{corollary}
\label{cor_apriori}Let $\left(  M,\left\langle ,\right\rangle ,\nabla
f\right)  $ be a complete Ricci soliton and let $u\in Lip_{loc}\left(
M\right)  $ be a non-negative weak solution of
\[
\Delta_{f}u\geq au+bu^{\sigma}\text{,}%
\]
for some constants $a\in\mathbb{R}$, $b>0$ and $\sigma>1$. Then%
\[
u\left(  x\right)^{\sigma-1}  \leq\frac{\max\left\{  -a,0\right\}  }{b}.
\]

\end{corollary}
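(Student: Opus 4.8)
The plan is to reduce Corollary~\ref{cor_apriori} to a direct application of Theorem~\ref{th_apriori}. The hypotheses of the corollary are the constant-coefficient special case of the semilinear inequality treated there, so the only real work is to verify the volume-growth condition \eqref{a-priori4} and to choose an admissible function $Q$.

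First I would recall that, being a Ricci soliton, $(M,\langle,\rangle,e^{-f}d\mathrm{vol})$ satisfies $Ric_f\equiv\lambda$, hence in particular $Ric_f\geq\lambda$. Thus Theorem~\ref{th_comparison} applies and yields, for a fixed $R_0>0$ and suitable constants $A,B,C>0$,
\[
\mathrm{vol}_f(B_r)\leq A+B\int_{R_0}^r e^{-\lambda t^2+Ct}\,dt,\qquad r\geq R_0.
\]
From this one gets a bound of the form $\log\mathrm{vol}_f(B_r)\leq c_1 r^2+c_2 r+c_3$ for $r$ large (when $\lambda\leq 0$ the Gaussian factor dominates; when $\lambda>0$ the integrand is bounded, so $\log\mathrm{vol}_f(B_r)$ grows at most linearly — in either case a quadratic bound certainly holds). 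In particular
\[
\limsup_{r\to+\infty}\frac{\log\mathrm{vol}_f(B_r)}{r^2}<+\infty .
\]

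Next I would feed this into Theorem~\ref{th_apriori} with the constant data $a(x)\equiv a$, $b(x)\equiv b>0$. Since $b$ is a positive constant, the requirement $b(x)\geq 1/Q(r(x))$ is met by \emph{any} non-decreasing positive $Q$ with $Q(t)\geq 1/b$ eventually; and since $a_-(x)\equiv\max\{-a,0\}$ is a finite constant, the ratio $a_-/b$ is the constant $H:=\max\{-a,0\}/b$, so the structural hypotheses $\sup_M a_-<\infty$ and $a_-/b\leq H$ hold trivially. The only constraint left on $Q$ is $Q(t)=o(t^2)$ together with \eqref{a-priori4}; taking, say, $Q(t)=\max\{1/b,\,t^{3/2}\}$ makes $Q(t)=o(t^2)$, and then
\[
\liminf_{r\to+\infty}\frac{Q(r)\log\mathrm{vol}_f(B_r)}{r^2}
=\liminf_{r\to+\infty} r^{3/2}\cdot\frac{\log\mathrm{vol}_f(B_r)}{r^2}
\]
which, given the quadratic bound on $\log\mathrm{vol}_f(B_r)$, behaves like $r^{3/2}\cdot O(1)$ and hence diverges — so this choice of $Q$ is \emph{not} admissible. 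The fix is to absorb the slack differently: since the quadratic bound says $\log\mathrm{vol}_f(B_r)/r^2$ is bounded, \eqref{a-priori4} already holds with the constant choice $Q(t)\equiv 1/b$ (which is non-decreasing, positive, and trivially $o(t^2)$), because then $Q(r)\log\mathrm{vol}_f(B_r)/r^2=(1/b)\cdot O(1)$ stays bounded. With this choice all hypotheses of Theorem~\ref{th_apriori} are satisfied, and its conclusion gives $u(x)\leq H^{1/(\sigma-1)}$ on $M$, i.e. $u(x)^{\sigma-1}\leq\max\{-a,0\}/b$, which is exactly the assertion.

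The step I expect to require the most care is the passage from the integral estimate \eqref{bounded6} to a clean polynomial (quadratic) bound on $\log\mathrm{vol}_f(B_r)$ that is valid uniformly in the sign of $\lambda$, since Theorem~\ref{th_apriori} is stated for an arbitrary Ricci soliton and the growth profile of the right-hand side of \eqref{bounded6} changes qualitatively between the shrinking, steady and expanding cases; but in every case the bound is at worst $e^{c r^2}$, which is all that \eqref{a-priori4} needs once $Q$ is taken to be a constant. Everything else is bookkeeping: matching the constant-coefficient data to the hypotheses of Theorem~\ref{th_apriori} and reading off the conclusion.
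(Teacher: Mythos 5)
Your proposal is correct and follows essentially the same route as the paper: the corollary is obtained by feeding the constant data $a(x)\equiv a$, $b(x)\equiv b$ into Theorem~\ref{th_apriori} with a constant choice of $Q$, the volume-growth hypothesis (\ref{a-priori4}) being guaranteed by the weighted volume comparison of Theorem~\ref{th_comparison}, which yields $\log\mathrm{vol}_{f}\left(  B_{r}\right)  =O\left(  r^{2}\right)$ in the shrinking, steady and expanding cases alike. The detour through $Q\left(  t\right)  =\max\left\{  1/b,t^{3/2}\right\}$ is unnecessary (as you yourself observe, it is inadmissible), but your final choice $Q\equiv1/b$ is exactly what is needed, so the argument stands.
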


We are now in the position to give the

\begin{proof}
[Proof of Theorem~\ref{th_scalar_bound}]By the Cauchy-Schwarz inequality,
$|\mathrm{Ric}|^{2}\geq\frac{1}{m}S^{2}$ and inserting in (\ref{basic7}) we
deduce that
\begin{equation}
\Delta_{f}S\leq\lambda S-\frac{1}{m}S^{2}. \label{bounded13}%
\end{equation}
In follows that $S_{-}\left(  x\right)  =\max\left\{  -S\left(  x\right)
,0\right\}  $ is a weak solution of%
\[
\Delta_{f}S_{-}\geq\lambda S_{-}+\frac{1}{m}S_{-}^{2}.
\]
Therefore, by Corollary \ref{cor_apriori}, $S_{-}$ is bounded from above or,
equivalently, $S_{\ast}=\inf_{M}S>-\infty$ (for this conclusion, see also
\cite{Zhang-PAMS}).
Applying Corollary \ref{cor_maximum} produces a sequence $\{x_{n}\}$ such that $\Delta_{f}%
S(x_{n})\geq-1/n$ and $S(x_{n})\rightarrow S_{\ast}$, and taking the liminf in
(\ref{bounded13}) along $\{x_{n}\}$ shows that $\lambda S_{\ast}-S_{\ast}%
^{2}/m\geq0$. Thus, if $\lambda<0,$ then $m\lambda\leq S_{\ast}\leq0,$ while,
if $\lambda>0$, then $0\leq S_{\ast}\leq m\lambda.$

Assume now that $S_{\ast}=\lambda m>0$. Then $S\geq S_{\ast}=m\lambda$ and
$\lambda S-\frac{1}{m}S^{2}\leq0$. It follows from (\ref{bounded13}) that
$S>0$ satisfies $\Delta_{f}S\leq0$. By Theorem~\ref{th_solitonparabolic} a
supersolution of $\triangle_{f}$ which is bounded below is constant. Hence,
$S=S_{\ast}=m\lambda$ is a constant, and $|\mathrm{Ric}|^{2}=\frac{1}{m}S^{2}%
$. By the equality case in the Cauchy-Schwarz inequality, we deduce that
$\mathrm{Ric}=\lambda\langle\,,\rangle$ with $\lambda>0$ and $M$ is compact by
Myers' Theorem. By (\ref{intro2}) $\mathrm{Hess}(f)=0$, and in particular $f$
is a harmonic function on $M$ compact, and therefore it is
constant.

Finally, since $S(x)\geq 0$, by the maximum principle (see \cite{GilbargTrudinger}, p. 35),
either $S(x)>0$ on $M$ or  $S(x)\equiv 0$. In the latter case it
follows from  (\ref{basic7}) that $Ric\equiv 0$ and then, by soliton
equation,
we conclude that  $f$ is a (necessarily non trivial) solution of%
\[
\mathrm{Hess}\left(  f\right)  =\lambda\left\langle\, ,\right\rangle .
\]
By Theorem \ref{th_tashiro} stated in the Introduction, $\left(
M,\left\langle \,,\right\rangle \right)  $ is isometric to $\mathbb{R}^{m}$.
\end{proof}

\section{Triviality of expanders under $L^{1<p<\infty}$ conditions}

It is well known that a non-negative, $L^{p}$-subharmonic function,
$1<p<+\infty$, on a complete Riemannian manifold must be constant, \cite{Y}.
This classical Liouville type theorem has been extended in various directions
to both linear and non-linear operators. Here we recall the following version
for the $f$-Laplacian established in \cite{PRS-vanishing}, Theorem 1.1. See
also \cite{PRS-Progress}. Recently, somewhat less general forms of this result
have been independently rediscovered in \cite{N}, \cite{PW1}, \cite{PW2}.

\begin{theorem}
\label{th_liouville}Let $\left(  M,\left\langle ,\right\rangle ,e^{-f}%
d\mathrm{vol}\right)  $ be a geodesically complete weighted manifold. Assume
that $u\in Lip_{loc}\left(  M\right)  $ satisfy%
\begin{equation}
u\Delta_{f}u\geq0\text{, weakly on } (  M,e^{-f}d\mathrm{vol}). \label{lp1}%
\end{equation}
If, for some $p>1$,%
\begin{equation}
\frac{1}{\int_{\partial B_{r}}\left\vert u\right\vert ^{p}e^{-f}d\mathrm{vol}_{m-1}%
}\notin L^{1}\left(  +\infty\right)  ,\label{lp2}%
\end{equation}
then $u$ is constant.
\end{theorem}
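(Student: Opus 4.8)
The plan is to run Yau's classical $L^{p}$-Liouville argument in the weighted setting, in the sharp form where only the ``divergence'' hypothesis (\ref{lp2}) on the $f$-weighted boundary integrals of $\left\vert u\right\vert ^{p}$ is used. The two ingredients are a Caccioppoli-type estimate for $w=\left\vert u\right\vert ^{p/2}$ and an optimal radial cut-off obtained by minimising a one-dimensional energy, glued together by the co-area formula.

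First I would prove the Caccioppoli estimate. Recalling that (\ref{lp1}) means $-\int_{M}\left\langle \nabla u,\nabla\left( u\phi\right) \right\rangle e^{-f}d\mathrm{vol}\geq0$ for all $0\leq\phi\in Lip_{c}\left( M\right) $, I would test it with $\phi=\psi^{2}\left\vert u\right\vert ^{p-2}$, $\psi\in Lip_{c}\left( M\right) $, expand $\nabla\left( u\phi\right) $, and use Cauchy--Schwarz with a small parameter to get
\[
\int_{M}\psi^{2}\left\vert u\right\vert ^{p-2}\left\vert \nabla u\right\vert ^{2}e^{-f}d\mathrm{vol}\leq C_{p}\int_{M}\left\vert \nabla\psi\right\vert ^{2}\left\vert u\right\vert ^{p}e^{-f}d\mathrm{vol},
\]
for some constant $C_{p}>0$ which blows up as $p\to1^{+}$. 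Since $p>1$, the left-hand integrand is locally integrable, so $w=\left\vert u\right\vert ^{p/2}\in W_{loc}^{1,2}\left( M\right) $; using $\left\vert u\right\vert ^{p-2}\left\vert \nabla u\right\vert ^{2}=\frac{4}{p^{2}}\left\vert \nabla w\right\vert ^{2}$ and $\left\vert u\right\vert ^{p}=w^{2}$ a.e., this becomes
\[
\int_{M}\psi^{2}\left\vert \nabla w\right\vert ^{2}e^{-f}d\mathrm{vol}\leq\widetilde{C}_{p}\int_{M}\left\vert \nabla\psi\right\vert ^{2}w^{2}e^{-f}d\mathrm{vol}.
\]

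Next I would plug in a radial cut-off. Fix $o\in M$, let $r\left( x\right) =\mathrm{dist}\left( o,x\right) $, fix $r_{0}>0$, and for $R>r_{0}$ take $\psi=\varphi\circ r$ with $\varphi$ Lipschitz, $\varphi\equiv1$ on $\left[ 0,r_{0}\right] $, $\varphi\equiv0$ on $\left[ R,+\infty\right) $, $0\leq\varphi\leq1$. Because $\left\vert \nabla r\right\vert =1$ a.e., the co-area formula yields $\int_{M}\left\vert \nabla\psi\right\vert ^{2}w^{2}e^{-f}d\mathrm{vol}=\int_{r_{0}}^{R}\varphi^{\prime}\left( t\right) ^{2}h\left( t\right) \,dt$, where $h\left( t\right) =\int_{\partial B_{t}}\left\vert u\right\vert ^{p}e^{-f}d\mathrm{vol}_{m-1}$ is finite for a.e.\ $t$ (since $\left\vert u\right\vert ^{p}e^{-f}\in L_{loc}^{1}$ and $M$ is complete). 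Minimising $\int_{r_{0}}^{R}\varphi^{\prime}\left( t\right) ^{2}h\left( t\right) \,dt$ over admissible $\varphi$ --- whose Euler--Lagrange equation is $\varphi^{\prime}h\equiv\text{const}$ --- gives infimum $\left( \int_{r_{0}}^{R}dt/h\left( t\right) \right) ^{-1}$, so that
\[
\int_{B_{r_{0}}}\left\vert \nabla w\right\vert ^{2}e^{-f}d\mathrm{vol}\leq\widetilde{C}_{p}\left( \int_{r_{0}}^{R}\frac{dt}{h\left( t\right) }\right) ^{-1}.
\]
By (\ref{lp2}) we have $1/h\notin L^{1}\left( +\infty\right) $, so letting $R\to+\infty$ forces $\nabla w\equiv0$ on $B_{r_{0}}$; as $r_{0}$ is arbitrary and $M$ is connected, $w$, and hence $u$, is constant.

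I expect the only real difficulties to be technical. The main obstacle is the admissibility of the test function $\psi^{2}\left\vert u\right\vert ^{p-2}$ when $1<p<2$, where it carries a negative power of $u$: it must first be regularised near the zero set of $u$ (for instance replacing $\left\vert u\right\vert ^{p-2}$ by $\left( u^{2}+\varepsilon^{2}\right) ^{(p-2)/2}$, so that $u\phi_{\varepsilon}\in Lip_{c}\left( M\right) $), deriving the estimate for $\phi_{\varepsilon}$ and then letting $\varepsilon\to0^{+}$. This is exactly the step where the hypothesis $p>1$ is indispensable, since both $C_{p}$ and the local integrability of $\left\vert \nabla w\right\vert ^{2}$ degenerate as $p\to1^{+}$. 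One must also dispose of the harmless degenerate case in which $h$ vanishes on an interval --- then $w$ already vanishes on the corresponding annulus --- and justify approximating the (possibly non-Lipschitz) minimiser of the one-dimensional problem by Lipschitz competitors. All these points are handled in \cite{PRS-vanishing}.
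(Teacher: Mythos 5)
The paper does not prove Theorem \ref{th_liouville}; it quotes it from \cite{PRS-vanishing} (Theorem 1.1; see also \cite{RS}), and your argument is correct and essentially reproduces the proof given in those references: the Caccioppoli inequality for $w=\left\vert u\right\vert ^{p/2}$ obtained from the test function $\psi^{2}\left\vert u\right\vert^{p-2}$ (with the $\varepsilon$-regularisation, which in fact is needed whenever $p<3$, not only for $1<p<2$), followed by the optimal radial cut-off, the co-area formula and the non-integrability hypothesis (\ref{lp2}); the weight $e^{-f}$ is harmless since it is smooth and positive. The remaining technical points you flag (degenerate $h$, approximation of the one-dimensional minimiser, and passing from $\left\vert u\right\vert$ constant to $u$ constant via connectedness) are handled exactly as you indicate, so no genuine gap remains.
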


\begin{remark}
\label{rem_positivepart}
{\rm Observe that if $u\in L^{p}\left(  M,e^{-f}%
d\mathrm{vol}\right)  $ then condition (\ref{lp2}) is satisfied. Note also
that no sign condition is required on $u$. Moreover, if the locally lipschitz
function $u$ satisfies both $\Delta_{f}u\geq0$ and the non-integrability
condition (\ref{lp2}) then, applying Theorem \ref{th_liouville} to $u_{+}%
=\max\left\{  u,0\right\}  $, gives that either $u$ is constant or $u\leq0$.
}
\end{remark}

\begin{theorem}
\label{th_trivexp2}Let $\left(  M,\left\langle ,\right\rangle ,\nabla
f\right)  $ be a geodesically complete, expanding Ricci soliton. If
\[
\frac{1}{\int_{\partial B_{r}}\left\vert \nabla f\right\vert ^{p}%
e^{-f}d\mathrm{vol}}_{m-1}\notin L^{1}\left(  +\infty\right)  ,
\]
for some $p>1$ then the soliton is trivial.
\end{theorem}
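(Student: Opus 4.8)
The plan is to reduce Theorem~\ref{th_trivexp2} directly to the $f$-Laplacian Liouville theorem (Theorem~\ref{th_liouville}) applied to the function $u=|\nabla f|$, using the Bochner-type inequality already recorded in Corollary~\ref{cor_basicineq}. First I would recall that, since the soliton is expanding, $\lambda<0$, so inequality (\ref{basic6}) in Corollary~\ref{cor_basicineq} reads
\[
|\nabla f|\,\Delta_{f}|\nabla f|\geq-\lambda|\nabla f|^{2}\geq0,
\]
weakly on $(M,e^{-f}d\mathrm{vol})$. Thus $u=|\nabla f|\in Lip_{loc}(M)$ is a non-negative function satisfying $u\Delta_{f}u\geq0$ weakly, which is precisely hypothesis (\ref{lp1}) of Theorem~\ref{th_liouville}.

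Next I would observe that the hypothesis of the present theorem is exactly the non-integrability condition (\ref{lp2}) for $u=|\nabla f|$ with this same exponent $p>1$: indeed
\[
\frac{1}{\int_{\partial B_{r}}|\nabla f|^{p}e^{-f}d\mathrm{vol}_{m-1}}\notin L^{1}(+\infty).
\]
Therefore Theorem~\ref{th_liouville} applies verbatim and yields that $u=|\nabla f|$ is constant on $M$. It remains to promote ``$|\nabla f|\equiv c$'' to triviality. If $c=0$ we are done. If $c>0$, I would plug the constancy of $|\nabla f|^{2}$ back into the Bochner identity (\ref{basic3}): the left-hand side $\tfrac12\Delta_{f}|\nabla f|^{2}$ vanishes, so
\[
|\mathrm{Hess}(f)|^{2}=\lambda|\nabla f|^{2}=\lambda c^{2}<0,
\]
since $\lambda<0$ and $c>0$. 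This is impossible, as the left-hand side is non-negative. Hence $c=0$, i.e. $f$ is constant, and the soliton is trivial.

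I do not expect a genuine obstacle here; the argument is essentially a dictionary translation, the content having been front-loaded into Corollary~\ref{cor_basicineq} and Theorem~\ref{th_liouville}. The only point requiring a modicum of care is the passage from the differential inequality for $u^2=|\nabla f|^2$ (which is what (\ref{basic3}) gives in smooth form) to the weak inequality $u\Delta_f u\geq 0$ for $u=|\nabla f|$ itself, which is exactly the Kato-type step (\ref{basic4}) already performed in Corollary~\ref{cor_basicineq}; one simply has to note that $|\nabla f|$ is only $Lip_{loc}$, not smooth, so Theorem~\ref{th_liouville} must be (and is) stated for $Lip_{loc}$ functions and weak solutions. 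Beyond that, verifying that the stated integral condition is literally (\ref{lp2}) with $u=|\nabla f|$, and ruling out the constant-but-nonzero case via the sign of $\lambda$ in (\ref{basic3}), completes the proof.
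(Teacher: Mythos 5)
Your proposal is correct and follows essentially the same route as the paper: invoke inequality (\ref{basic6}) (valid since $\lambda<0$), apply Theorem~\ref{th_liouville} to $u=|\nabla f|$ under the stated non-integrability condition to get $|\nabla f|$ constant, and then feed this back into (\ref{basic3}) where the sign of $\lambda$ forces $|\nabla f|=0$. The only difference is that you spell out the final sign argument that the paper leaves implicit.
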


\begin{proof}
Recall from equation (\ref{basic6}) that%
\[
\left\vert \nabla f\right\vert \Delta_{f}\left\vert \nabla f\right\vert
\geq-\lambda\left\vert \nabla f\right\vert ^{2}\geq0\text{, weakly on }(  M,e^{-f}d\mathrm{vol}).
\]
An application of Theorem \ref{th_liouville} gives that $\left\vert \nabla
f\right\vert $ is constant. Using this information into (\ref{basic3}) we
conclude that $\left\vert \nabla f\right\vert =0$ and $f$ is a constant function.
\end{proof}

\section{Triviality of expanders under $L^{1}$ conditions}

The following result has been recently obtained in \cite{PRS-Progress},
Theorem 4.3.

\begin{theorem}
\label{th_rs-revista}Let $\left(  M,\left\langle ,\right\rangle ,e^{-f}%
d\mathrm{vol}\right)  $ be a geodesically complete weighted manifold. Let
$0\leq u\in Lip_{loc}\left(  M\right)  $ be a weak solution of $\Delta
_{f}u\geq0$ satisfying%
\[
\text{(i) }\int_{\partial B_{r}}ue^{-f}d\mathrm{vol}_{m-1}\left(  x\right)
=O\left(  \frac{1}{r\log^{\alpha}r}\right)  ,\qquad\text{(ii) }u\left(
x\right)  =O\left(  e^{\beta r\left(  x\right)  ^{2}}\right)  ,
\]
as $r\left(  x\right)  \rightarrow+\infty$, for some constants $\alpha
,\beta>0.$ Then $u$ is constant.
\end{theorem}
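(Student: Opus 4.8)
The plan is to prove the statement by a weighted Caccioppoli (nonlinear test-function) argument, of the kind used for $L^{1}$-type vanishing theorems, feeding the two hypotheses into the estimate in complementary ways. Since $\Delta_{f}u\geq0$ weakly on $\left(M,e^{-f}d\mathrm{vol}\right)$, for every $0\leq\varphi\in Lip_{c}\left(M\right)$ one has $\int_{M}\left\langle \nabla u,\nabla\varphi\right\rangle e^{-f}d\mathrm{vol}\leq0$. I would insert $\varphi=\Lambda(u)\,\eta^{2}$, where $\eta=\eta(r(x))\geq0$ is a radial cutoff and $\Lambda\geq0$ is a smooth increasing concave function to be chosen; this $\varphi$ is admissible precisely because $u\geq0$ and $u\in Lip_{loc}$. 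Expanding $\nabla\varphi$ and applying Young's inequality to the cross term yields the weighted Caccioppoli estimate
\[
\int_{M}\eta^{2}\,\Lambda'(u)\left\vert \nabla u\right\vert ^{2}e^{-f}d\mathrm{vol}\leq4\int_{M}\frac{\Lambda(u)^{2}}{\Lambda'(u)}\left\vert \nabla\eta\right\vert ^{2}e^{-f}d\mathrm{vol}.
\]
Setting $G(t)=\int_{0}^{t}\sqrt{\Lambda'(s)}\,ds$, the left-hand side equals $\int_{M}\eta^{2}\left\vert \nabla G(u)\right\vert ^{2}e^{-f}d\mathrm{vol}$, so it suffices to exhibit $\Lambda$ and a sequence of cutoffs $\eta=\eta_{R}\to1$ along which the right-hand side tends to $0$: this forces $\nabla G(u)\equiv0$, and since $\Lambda'>0$ it gives that $u$ is constant.

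To estimate the right-hand side I would use the coarea formula to rewrite it as $\int_{0}^{\infty}\left\vert \eta'(r)\right\vert ^{2}h(r)\,dr$, with
\[
h(r)=\int_{\partial B_{r}}\frac{\Lambda(u)^{2}}{\Lambda'(u)}\,e^{-f}d\mathrm{vol}_{m-1},
\]
and then control $h$ through the two hypotheses. For a concave $\Lambda$ close to the logarithm, $\Lambda(t)^{2}/\Lambda'(t)$ is bounded by $t$ times a slowly growing factor; here hypothesis (ii), $u=O\!\left(e^{\beta r^{2}}\right)$, is used to replace that factor on $\partial B_{r}$ by a power of $r$, while hypothesis (i), $\int_{\partial B_{r}}u\,e^{-f}d\mathrm{vol}_{m-1}=O\!\left(1/(r\log^{\alpha}r)\right)$, bounds the remaining weighted mass of $u$ on the sphere. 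Combining them yields a bound on $h(r)$ whose decisive feature is the factor $\log^{-\alpha}r$ with $\alpha>0$, and a logarithmic cutoff (equal to $1$ on $B_{R}$ and decaying like $\log(R'/r)/\log(R'/R)$ on an outer annulus) is the natural candidate to make $\int\left\vert \eta_{R}'\right\vert ^{2}h\,dr$ small. The hypotheses are calibrated exactly so that this remainder can be driven to zero, and the delicate point is to choose $\Lambda$ and $\eta_{R}$ jointly so that it does.

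The main obstacle, and the reason the hypotheses take this precise shape, is that no curvature or volume bound is assumed, so the geometry of geodesic spheres is uncontrolled: the weighted mean curvature $\Delta_{f}r$ enters every radial integration by parts and cannot be signed or bounded. Consequently all the quantitative information must be extracted from (i) and (ii), which are sharply balanced—the Gaussian growth $e^{\beta r^{2}}$ against the weighted-sphere decay $1/(r\log^{\alpha}r)$. I expect the crux to be the joint optimization of the nonlinearity $\Lambda$ (which must keep $\Lambda'>0$, hence $\Lambda^{2}/\Lambda'$ superlinear, yet as small as the positivity constraint allows) and of the cutoff $\eta_{R}$, so as to annihilate the Caccioppoli remainder in the limit; the borderline nature of the estimate is where the real work lies, together with the care needed to justify the test-function manipulations and the coarea formula for the merely $Lip_{loc}$ solution $u$. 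As motivation for focusing on the spherical average in (i), note that the nonnegative flux $\Phi(r)=\int_{\partial B_{r}}\partial_{n}u\,e^{-f}d\mathrm{vol}_{m-1}$ is formally nondecreasing because $\Delta_{f}u\geq0$, so controlling the weighted mass of $u$ on $\partial B_{r}$ is exactly what detects whether this flux can stay positive.
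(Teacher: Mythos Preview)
The paper does not prove this theorem: it is quoted verbatim as Theorem~4.3 of \cite{PRS-Progress}, with only the remark that the stated version with general $\beta>0$ follows from the same proof as the original (which has $\beta=1$). There is therefore no proof in the paper to compare your proposal against.

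That said, your strategy is in the right family. The argument in \cite{PRS-Progress} is indeed a weighted Caccioppoli estimate with a concave nonlinearity and a carefully chosen cutoff, and the roles you assign to hypotheses (i) and (ii) are correct: (ii) converts the nonlinear factor $\Lambda(u)^{2}/\Lambda'(u)$ on $\partial B_{r}$ into a polynomially-growing function of $r$, while (i) provides the decisive decaying spherical mass. The point you flag as ``the crux'' really is the crux: for any admissible concave $\Lambda$ one necessarily has $\Lambda(t)^{2}/\Lambda'(t)\geq c\,t\log t$ for large $t$ (since $(-1/\Lambda)'\leq C/(t\log t)$ would force $1/\Lambda$ to become negative), so the best possible bound on $h(r)$ is of order $r\,(\log r)^{1-\alpha}$ up to iterated-logarithm corrections. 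With a \emph{linear} cutoff on $[R,2R]$ this does not suffice; it is precisely your logarithmic cutoff, with a choice such as $R'=R^{2}$, that turns $\int|\eta'|^{2}h$ into a quantity of order $(\log R)^{-\alpha}$ and hence forces it to vanish for every $\alpha>0$. Your proposal correctly identifies all of these ingredients, though it stops short of exhibiting the concrete $\Lambda$ (something like $\Lambda(t)=1-(\log\log(t+C))^{-\epsilon}$ works) and carrying out the limiting computation; that is where the actual proof in \cite{PRS-Progress} does its work.
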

Note that although  Theorem 4.3 is stated with $\beta=1$ in condition (ii),
the proof shows that  the more general version stated above holds.

In particular, applying the theorem to the positive part $u_{+}=\max\left\{
u,0\right\}  $ of the given solution $u$ yields the following

\begin{corollary}
Let $\left(  M,\left\langle ,\right\rangle ,e^{-f}d\mathrm{vol}\right)  $ be a
geodesically complete weighted manifold. If $u\in Lip_{loc}\left(  M\right)
\cap L^{1}\left(  M,e^{-f}d\mathrm{vol}\right)  $ is a solution of $\Delta
_{f}u\geq0$ satisfying $u\left(  x\right)  \leq\alpha e^{\beta r\left(
x\right)  ^{2}}$, for some constants $\alpha,\beta>0$, then either $u$ is
constant or $u\leq0$.
\end{corollary}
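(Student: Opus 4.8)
The plan is to deduce the Corollary as a direct consequence of Theorem~\ref{th_rs-revista} by passing to the positive part $u_+ = \max\{u,0\}$. First I would recall that $u_+ \in Lip_{loc}(M)$ and, since $u$ is a weak solution of $\Delta_f u \geq 0$, the function $u_+$ is again a weak solution of $\Delta_f u_+ \geq 0$: on the open set $\{u>0\}$ this is immediate from $u_+ = u$ there, while across the boundary the standard Kato-type argument (a convex increasing function of a subsolution is a subsolution, applied to $t\mapsto t_+$, or equivalently a truncation/regularization of the positive part) shows the differential inequality persists weakly. This is a routine fact for the $f$-Laplacian since $\Delta_f$ differs from $\Delta$ only by the first order drift term $-\langle \nabla f,\nabla\cdot\rangle$, which does not affect the argument.

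Next I would verify that the two hypotheses of Theorem~\ref{th_rs-revista} are met by $u_+$. For hypothesis (ii), from $0\le u \le \alpha e^{\beta r(x)^2}$ we get $0 \le u_+ \le \alpha e^{\beta r(x)^2}$, so $u_+(x) = O(e^{\beta r(x)^2})$ with the same $\beta>0$, which is exactly the general form noted to follow from the proof. For hypothesis (i), I would use the hypothesis $u \in L^1(M,e^{-f}d\mathrm{vol})$, hence $u_+ \in L^1(M,e^{-f}d\mathrm{vol})$ as well. By the coarea formula,
\[
\int_M u_+ e^{-f} d\mathrm{vol} = \int_0^\infty \Bigl( \int_{\partial B_r} u_+ e^{-f} d\mathrm{vol}_{m-1}\Bigr) dr < +\infty,
\]
so the nonnegative function $\phi(r) := \int_{\partial B_r} u_+ e^{-f} d\mathrm{vol}_{m-1}$ lies in $L^1(0,\infty)$. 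An $L^1$ function need not decay pointwise, but one can pass to a suitable subsequence $r_k\to\infty$ along which $\phi(r_k) = O(1/(r_k \log^\alpha r_k))$ for any fixed $\alpha>0$; indeed, if no such sequence existed then $\phi(r) \ge c/(r\log^\alpha r)$ for all large $r$ and a fixed $c>0$, forcing $\int^\infty \phi = +\infty$ since $1/(r\log^\alpha r) \notin L^1(+\infty)$ when $0<\alpha\le 1$ — a contradiction. Thus condition (i) holds at least along a sequence, which is what the proof of Theorem~\ref{th_rs-revista} actually uses.

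I would then apply Theorem~\ref{th_rs-revista} to conclude that $u_+$ is constant, say $u_+ \equiv c \ge 0$. If $c=0$ then $u\le 0$ on $M$, which is one of the two asserted alternatives. If $c>0$, then $u = u_+ \equiv c$ wherever $u>0$, but also $u\le c$ everywhere and $u$ cannot dip below $0$ while equaling $c$ on the nonempty open set $\{u>0\}$; a short argument (or simply noting $u\le u_+\equiv c$ and then that $u$ is $f$-subharmonic attaining its sup $c$, hence constant by the strong maximum principle, or directly that $u_+\in L^1$ with $u_+\equiv c>0$ forces $\mathrm{vol}_f(M)<\infty$) shows $u$ itself is constant. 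The main subtlety — and the only point requiring care — is the passage from the $L^1$ integrability to the decay condition (i): since (i) is a genuine pointwise-as-$r\to\infty$ statement whereas $L^1$ only gives decay along a sequence, one must either invoke the fact (remarked in the excerpt just after Theorem~\ref{th_rs-revista}) that the proof only needs the hypotheses along a diverging sequence of radii, or else appeal to the monotonicity built into the Caccioppoli-type estimates in \cite{PRS-Progress} that make the sequential version sufficient. Everything else is a routine specialization.
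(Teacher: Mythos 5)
Your proposal is correct and follows the paper's own route: the paper likewise obtains the corollary in one stroke by applying Theorem~\ref{th_rs-revista} to the positive part $u_{+}=\max\left\{ u,0\right\}$. The extra care you take with condition (i) — extracting from $u_{+}\in L^{1}\left(M,e^{-f}d\mathrm{vol}\right)$ a divergent sequence of radii along which $\int_{\partial B_{r}}u_{+}e^{-f}d\mathrm{vol}_{m-1}\leq C/(r\log^{\alpha}r)$, which is what the quoted proof actually uses — is a point the paper leaves implicit, and your final dichotomy ($u_{+}\equiv 0$ giving $u\leq 0$, or $u_{+}\equiv c>0$ forcing $u\equiv c$) is the intended conclusion.
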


In order to apply Theorem \ref{th_rs-revista} and conclude triviality of
expanders under solely $L^1$ conditions we also need the following estimate from \cite{Zhang-PAMS}.

\begin{theorem}\label{lo-estimate}
Let $(M,\left<,\right>,\nabla f)$ be a complete, expanding Ricci soliton. Then, having fixed a reference origin $o \in M$, there exists a constant $c>0$ such that
\begin{align*}
&(1)\text{ } f(x)\leq c(1+r(x)^2),\\&(2)\text{ }|\nabla f|\leq c(1 + r(x)).
\end{align*}
\end{theorem}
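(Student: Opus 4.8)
The plan is to derive both bounds from the classical Hamilton conservation identity for gradient Ricci solitons, combined with the lower bound on the scalar curvature that is already available from Corollary~\ref{cor_apriori}, and then to integrate a resulting first-order gradient inequality along minimizing geodesics issuing from $o$. The first step is to record the identity $\left\vert \nabla f\right\vert ^{2}+S-2\lambda f\equiv C$ on $M$ for a suitable constant $C\in\mathbb{R}$; this is standard (cf. \cite{ELM}, \cite{PW1}). Indeed, tracing (\ref{intro2}) gives $S+\Delta f=m\lambda$, so $\nabla\Delta f=-\nabla S$; the contracted second Bianchi identity $\operatorname{div}Ric=\tfrac{1}{2}\nabla S$ together with the commutation formula $\operatorname{div}\mathrm{Hess}(f)=\nabla\Delta f+Ric(\nabla f,\cdot)$ and the soliton equation $\mathrm{Hess}(f)=\lambda\left\langle ,\right\rangle -Ric$ force $Ric(\nabla f,\cdot)=\tfrac{1}{2}\nabla S$; hence $\tfrac{1}{2}\nabla\left\vert \nabla f\right\vert ^{2}=\mathrm{Hess}(f)(\nabla f,\cdot)=\lambda\nabla f-\tfrac{1}{2}\nabla S$, so that $\nabla(\left\vert \nabla f\right\vert ^{2}+S-2\lambda f)=0$ and the claim follows by connectedness of $M$.

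Next I would observe that $S$ is bounded below. Writing $\lambda=-\left\vert \lambda\right\vert $ with $\left\vert \lambda\right\vert >0$ (the soliton is expanding), and arguing exactly as in the proof of Theorem~\ref{th_scalar_bound}, Theorem~\ref{th_scalar} and the Cauchy--Schwarz inequality give $\Delta_{f}S\le\lambda S-\tfrac{1}{m}S^{2}$, so that $S_{-}=\max\{-S,0\}\in Lip_{loc}(M)$ is a weak solution of $\Delta_{f}S_{-}\ge\lambda S_{-}+\tfrac{1}{m}S_{-}^{2}$; Corollary~\ref{cor_apriori} (with $a=\lambda<0$, $b=1/m$, $\sigma=2$) then yields $S_{-}\le-m\lambda$, i.e. $S\ge m\lambda=-m\left\vert \lambda\right\vert $ on $M$. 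Rewriting the identity of the previous step as $\left\vert \nabla f\right\vert ^{2}=2\lambda f-S+C$ and using $-S\le m\left\vert \lambda\right\vert $, the non-negativity of the left-hand side forces $2\left\vert \lambda\right\vert f\le C+m\left\vert \lambda\right\vert $, that is, $f\le a:=\tfrac{C+m\left\vert \lambda\right\vert }{2\left\vert \lambda\right\vert }$ on $M$. In particular $f$ is bounded above by a constant, so $(1)$ holds with any $c\ge\max\{a,1\}$.

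With this in hand, set $u=a-f\ge0$. Since $2\lambda a+C+m\left\vert \lambda\right\vert =0$, one gets $\left\vert \nabla u\right\vert ^{2}=\left\vert \nabla f\right\vert ^{2}=2\lambda f-S+C\le2\lambda(a-u)+C+m\left\vert \lambda\right\vert =2\left\vert \lambda\right\vert u$ on $M$. Fix $x\in M$ and, using geodesic completeness, choose a unit-speed minimizing geodesic $\gamma\colon[0,\ell]\to M$ with $\gamma(0)=o$, $\gamma(\ell)=x$ and $\ell=r(x)$. The $C^{1}$ function $y(s)=u(\gamma(s))\ge0$ satisfies $y'(s)=\langle\nabla u,\gamma'(s)\rangle\le\left\vert \nabla u\right\vert \le\sqrt{2\left\vert \lambda\right\vert \,y(s)}$, and the standard comparison for this differential inequality --- on the open set $\{y>0\}$ one has $(\sqrt{y})'\le\sqrt{\left\vert \lambda\right\vert /2}$, and this extends across $\{y=0\}$ by continuity of $\sqrt{y}$ --- gives $\sqrt{u(x)}=\sqrt{y(\ell)}\le\sqrt{y(0)}+\sqrt{\left\vert \lambda\right\vert /2}\;r(x)$, whence $u(x)\le 2u(o)+\left\vert \lambda\right\vert r(x)^{2}$. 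Therefore $\left\vert \nabla f(x)\right\vert =\left\vert \nabla u(x)\right\vert \le\sqrt{2\left\vert \lambda\right\vert \,u(x)}\le\sqrt{2\left\vert \lambda\right\vert }\bigl(\sqrt{2u(o)}+\sqrt{\left\vert \lambda\right\vert }\,r(x)\bigr)$, which is $(2)$ after enlarging $c$; the same bound on $u(x)$ additionally yields the quadratic lower bound $f(x)\ge a-2u(o)-\left\vert \lambda\right\vert r(x)^{2}$.

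The argument presents no serious obstacle once the conservation identity of the first step is in hand: everything else is elementary. The two points deserving a little care are the derivation of that identity, which rests on the contracted second Bianchi identity and is classical, and the comparison argument for $y'\le\sqrt{2\left\vert \lambda\right\vert y}$ at points where $y$ vanishes; the latter is handled by working on the open set $\{y>0\}$ and invoking continuity, and no regularization is needed because $f$ is smooth.
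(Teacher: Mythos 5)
Your argument is correct. Note that the paper does not actually prove this statement: it imports it from \cite{Zhang-PAMS}, and what you have written is essentially the standard proof from that reference --- Hamilton's identity $\left\vert \nabla f\right\vert ^{2}+S-2\lambda f\equiv C$, a lower bound $S\geq m\lambda$ on the scalar curvature, the resulting upper bound on $f$ and the inequality $\left\vert \nabla u\right\vert ^{2}\leq 2\left\vert \lambda\right\vert u$ for $u=a-f\geq 0$, integrated along minimizing geodesics. Your one genuine variation is to obtain $S\geq m\lambda$ from the paper's own Corollary~\ref{cor_apriori} (the a-priori estimate applied to $S_{-}$, exactly as in the proof of Theorem~\ref{th_scalar_bound}) rather than from B.-L. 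Chen's pointwise estimate used in \cite{Zhang-PAMS}; this is legitimate and involves no circularity, since Corollary~\ref{cor_apriori} rests only on the weighted volume comparison of Theorem~\ref{th_comparison}. The only step worth polishing is the ODE comparison for $y'\leq\sqrt{2\left\vert \lambda\right\vert y}$ at zeros of $y$: your continuity argument works, but it is cleaner to apply the comparison to $y+\varepsilon$ and let $\varepsilon\downarrow 0$. As a bonus, your proof also yields the quadratic lower bound on $f$, which is the content of Zhang's completeness theorem.
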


\begin{remark}
\rm{Note that, according to the scalar curvature estimates of Theorem \ref{th_scalar_bound}, the above constant $c>0$ can be expressed in terms of the soliton constant $\lambda<0$ and the dimension of $M$.}
\end{remark}

As an immediate consequence of Theorems \ref{th_rs-revista} and \ref{lo-estimate}, arguing as in the
proof of Theorem \ref{th_trivexp2}, we get the next

\begin{theorem}
Let $\left(  M,\left\langle ,\right\rangle ,\nabla f\right)  $ be a
geodesically complete, expanding Ricci soliton. If%
\begin{equation}
\int_{\partial B_{r}}\left\vert \nabla f\right\vert e^{-f}%
d\mathrm{vol}_{m-1}=O\left(  \frac{1}{r\log^{\alpha}r}\right)
, \label{rs-revista2}%
\end{equation}
for some positive constants $\alpha,\beta$, and for $r\left(  x\right)  $
sufficiently large, then the soliton is trivial.
\end{theorem}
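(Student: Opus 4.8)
The plan is to run the same argument as in the proof of Theorem~\ref{th_trivexp2}, but replacing the $L^{p}$-Liouville theorem by the $L^{1}$-type result, Theorem~\ref{th_rs-revista}, applied to the locally Lipschitz function $u=|\nabla f|$; once we know that $|\nabla f|$ is constant, the weighted Bochner identity~(\ref{basic3}) forces $f$ to be constant, so the soliton is trivial.

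First I would set up the differential inequality. Since the soliton is expanding, $\lambda<0$, so~(\ref{basic6}) of Corollary~\ref{cor_basicineq} reads $|\nabla f|\,\Delta_{f}|\nabla f|\geq-\lambda|\nabla f|^{2}\geq0$ weakly on $(M,e^{-f}d\mathrm{vol})$. Theorem~\ref{th_rs-revista}, however, requires the honest inequality $\Delta_{f}u\geq0$, not the product form, so the one point that needs care is upgrading the above to
\[
\Delta_{f}|\nabla f|\ \geq\ -\lambda|\nabla f|\ \geq\ 0\qquad\text{weakly on }M.
\]
On the open set $\{|\nabla f|>0\}$, where $|\nabla f|$ is smooth, this is immediate by dividing by $|\nabla f|$, or equivalently by combining $\tfrac12\Delta_{f}|\nabla f|^{2}=|\mathrm{Hess}(f)|^{2}-\lambda|\nabla f|^{2}$ with the Kato inequality~(\ref{basic4}); across the closed set $\{|\nabla f|=0\}$, where $|\nabla f|$ attains its minimum value $0$, the weak supersolution property extends by a standard localization argument (the same one tacitly used to pass to Theorem~\ref{th_liouville} in the proof of Theorem~\ref{th_trivexp2}). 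I expect this to be the only mildly delicate step; everything else is a matching of hypotheses.

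It then remains to check the two conditions of Theorem~\ref{th_rs-revista} for $u=|\nabla f|$. Condition (i) is exactly the standing assumption~(\ref{rs-revista2}), i.e. $\int_{\partial B_{r}}|\nabla f|\,e^{-f}d\mathrm{vol}_{m-1}=O\left(1/(r\log^{\alpha}r)\right)$. For condition (ii) I would invoke the gradient estimate Theorem~\ref{lo-estimate}(2), which for a complete expanding soliton gives $|\nabla f|(x)\leq c(1+r(x))$; in particular $u(x)=O\left(e^{\beta r(x)^{2}}\right)$ for any $\beta>0$, so (ii) holds automatically (this is the origin of the — in fact redundant — constant $\beta$ appearing in the statement). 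Theorem~\ref{th_rs-revista} then yields that $|\nabla f|\equiv c_{0}$ is constant. Plugging this into~(\ref{basic3}) gives $0=|\mathrm{Hess}(f)|^{2}-\lambda c_{0}^{2}$, i.e. $|\mathrm{Hess}(f)|^{2}=\lambda c_{0}^{2}$; since $\lambda<0$ the right-hand side is non-positive, which forces $c_{0}=0$ (and $\mathrm{Hess}(f)\equiv0$). Hence $\nabla f\equiv0$, $f$ is constant, and the soliton is trivial.
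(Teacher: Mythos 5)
Your proof is correct and is essentially the paper's own argument: the paper disposes of this theorem in one line, as an immediate consequence of Theorem \ref{th_rs-revista} combined with the gradient estimate of Theorem \ref{lo-estimate}(2), arguing as in the proof of Theorem \ref{th_trivexp2}. The only extra content in your write-up is the explicit verification (via Kato plus a regularization/localization across the zero set of $|\nabla f|$) that the product inequality (\ref{basic6}) upgrades to the genuine weak inequality $\Delta_{f}\left\vert \nabla f\right\vert \geq-\lambda\left\vert \nabla f\right\vert \geq0$ required by Theorem \ref{th_rs-revista}; this standard point is left implicit in the paper, and your observation that the constant $\beta$ in the statement is redundant is also accurate.
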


\section{More on $L^{1}$-Liouville theorems and some rigidity results}

Following classical terminology in linear potential theory we say that \ a
weighted Riemannian manifold $\left(  M,\left\langle ,\right\rangle
,e^{-f}d\mathrm{vol}\right)  $ is $f$-parabolic if every solution of
$\Delta_{f}u\geq0$ satisfying $u^{\ast}=\sup_{M}u<+\infty$ must be constant.
Equivalently, $\left(  M,\left\langle ,\right\rangle ,e^{-f}d\mathrm{vol}%
\right)  $ is non-parabolic if and only if $\Delta_{f}$ possesses a positive,
minimal Green kernel $G_{f}\left(  x,y\right)  $. It can be shown that a
sufficient condition for $\left(  M,\left\langle ,\right\rangle ,e^{-f}%
d\mathrm{vol}\right)  $ to be parabolic is that $M$ is geodesically complete
and%
\begin{equation}
\mathrm{vol}_{f}\left(  \partial B_{r}\right)  ^{-1}\notin L^{1}\left(
+\infty\right)  . \label{more1}%
\end{equation}
All these facts can be easily established adapting to the diffusion operator
$\Delta_{f}$ standard proofs for the Laplace-Beltrami operator; \cite{G},
\cite{RS}. In particular, according to Theorem \ref{th_comparison} we have

\begin{theorem}
\label{th_solitonparabolic}A complete, gradient shrinking Ricci soliton
$\left(  M,\left\langle ,\right\rangle ,\nabla f\right)  $ is $f$-parabolic.
\end{theorem}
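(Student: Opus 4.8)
The plan is to derive Theorem~\ref{th_solitonparabolic} from the sufficient condition \eqref{more1} by first showing that a shrinking soliton has \emph{finite} weighted volume. Since the soliton is shrinking we have $\lambda>0$, and the soliton equation \eqref{intro2} says precisely that $Ric_{f}\equiv\lambda$, so in particular $Ric_{f}\geq\lambda$. Hence Theorem~\ref{th_comparison} applies and, for a fixed $R_{0}>0$ and suitable constants $A,B,C>0$,
\[
\mathrm{vol}_{f}(B_{r})\leq A+B\int_{R_{0}}^{r}e^{-\lambda t^{2}+Ct}\,dt,\qquad r\geq R_{0}.
\]
Because $\lambda>0$ the integrand decays like a Gaussian, so $\int_{R_{0}}^{+\infty}e^{-\lambda t^{2}+Ct}\,dt<+\infty$; letting $r\to+\infty$ and using that $B_{r}\uparrow M$, I conclude that $V:=\mathrm{vol}_{f}(M)<+\infty$.

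Next I would bridge the gap to the sphere-volume criterion \eqref{more1} via the coarea formula and a Cauchy--Schwarz estimate. Applying the coarea formula to the distance function $r(x)$ (which satisfies $|\nabla r|=1$ almost everywhere) gives
\[
\int_{R_{0}}^{r}\mathrm{vol}_{f}(\partial B_{t})\,dt=\mathrm{vol}_{f}\bigl(B_{r}\setminus B_{R_{0}}\bigr)\leq V,\qquad r\geq R_{0}.
\]
Then, by the Cauchy--Schwarz inequality,
\[
(r-R_{0})^{2}=\Bigl(\int_{R_{0}}^{r}dt\Bigr)^{2}\leq\Bigl(\int_{R_{0}}^{r}\mathrm{vol}_{f}(\partial B_{t})\,dt\Bigr)\Bigl(\int_{R_{0}}^{r}\mathrm{vol}_{f}(\partial B_{t})^{-1}\,dt\Bigr)\leq V\int_{R_{0}}^{r}\frac{dt}{\mathrm{vol}_{f}(\partial B_{t})},
\]
so that $\int_{R_{0}}^{r}\mathrm{vol}_{f}(\partial B_{t})^{-1}\,dt\geq(r-R_{0})^{2}/V\to+\infty$ as $r\to+\infty$. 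Therefore $\mathrm{vol}_{f}(\partial B_{r})^{-1}\notin L^{1}(+\infty)$, and the sufficient condition \eqref{more1} yields that $(M,\langle,\rangle,e^{-f}d\mathrm{vol})$ is $f$-parabolic.

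Since all the ingredients — the Wei--Wylie comparison, the coarea formula, and the parabolicity criterion recalled before the statement — are already in hand, I do not expect any serious obstacle here; the only point deserving attention is that the shrinking hypothesis $\lambda>0$ is exactly what forces the Gaussian integral to converge, so the argument properly breaks down in the steady and expanding cases, as it must. Alternatively, one could bypass the coarea/Cauchy--Schwarz step entirely by invoking the (equally standard, and equally easy to adapt from the Laplace--Beltrami setting) ball-volume test $r/\mathrm{vol}_{f}(B_{r})\notin L^{1}(+\infty)$, which holds trivially once $\mathrm{vol}_{f}(B_{r})$ is bounded.
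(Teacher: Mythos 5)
Your proof is correct and follows essentially the same route as the paper, which simply invokes the Wei--Wylie comparison (Theorem~\ref{th_comparison}) with $\lambda>0$ to get bounded weighted volume and then the parabolicity criterion \eqref{more1}; you have merely made explicit the standard coarea/Cauchy--Schwarz bridge (or, alternatively, the ball-volume test) that the paper leaves to the cited references \cite{G}, \cite{RS}. The only cosmetic remark is that when $M$ is compact the Cauchy--Schwarz step is vacuous since $\partial B_r=\emptyset$ for large $r$, but then $f$-parabolicity holds trivially.
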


We also point out the following consequence of Theorem
\ref{th_solitonparabolic}, Theorem \ref{th_liouville} and Remark
\ref{rem_positivepart}.

\begin{corollary}
Let $\left(  M,\left\langle ,\right\rangle ,\nabla f\right)  $ \ be a
complete, gradient, shrinking Ricci soliton. If $u\in Lip_{loc}\left(
M\right)  $ satisfies $\Delta_{f}u\geq0$ and $u\in L^{p}\left(  M,e^{-f}%
d\mathrm{vol}\right)  $, for some $1<p<+\infty$, then $u$ is constant.
\end{corollary}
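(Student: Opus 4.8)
The plan is to deduce the statement directly from the three ingredients already at our disposal: $f$-parabolicity of shrinking solitons (Theorem~\ref{th_solitonparabolic}), the $L^p$-Liouville theorem for the $f$-Laplacian (Theorem~\ref{th_liouville}), and Remark~\ref{rem_positivepart}. Since $u$ satisfies $\Delta_f u\ge 0$ weakly and $u\in L^p(M,e^{-f}d\mathrm{vol})$ for some $1<p<+\infty$, Remark~\ref{rem_positivepart} tells us that the $L^p$ condition implies the non-integrability condition \eqref{lp2} with this same $p$; applying Theorem~\ref{th_liouville} to the positive part $u_+=\max\{u,0\}$ (which is still a weak subsolution of $\Delta_f$, locally Lipschitz, and in the same $L^p$ space) yields that either $u_+$ is constant or $u\le 0$. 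In the first alternative, if that constant is nonzero then $u$ is bounded above, and $f$-parabolicity forces $u$ to be globally constant; if the constant is zero, then $u\le 0$, which is the second alternative anyway.

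So in all cases we are reduced to the situation $u\le 0$, i.e.\ $\sup_M u\le 0<+\infty$. At this point $u$ is a bounded-above weak solution of $\Delta_f u\ge 0$ on the $f$-parabolic manifold $(M,\langle,\rangle,\nabla f)$, hence by the very definition of $f$-parabolicity recalled just before Theorem~\ref{th_solitonparabolic}, $u$ must be constant. This completes the argument.

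I would write the proof essentially in these two short movements: first invoke Remark~\ref{rem_positivepart}/Theorem~\ref{th_liouville} to get the dichotomy ``$u$ constant or $u\le 0$'', then kill the remaining case with parabolicity. The only point requiring a word of care — and the closest thing to an obstacle — is the bookkeeping needed to make sure the hypotheses of Theorem~\ref{th_liouville} genuinely apply to $u_+$ rather than to $u$ (local Lipschitz regularity of $u_+$, the weak differential inequality $u_+\Delta_f u_+\ge 0$ passing to the positive part, and $u_+\in L^p$), all of which is already handled in the statement of Remark~\ref{rem_positivepart}. There is no delicate estimate here; the content is entirely in combining the previously established linear and $L^p$-Liouville results for $\Delta_f$ on shrinking solitons.

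A concrete write-up: \emph{Proof.} Since $u\in L^{p}(M,e^{-f}d\mathrm{vol})$ with $1<p<+\infty$, condition \eqref{lp2} holds for this $p$, and by Remark~\ref{rem_positivepart} applied to $u_{+}=\max\{u,0\}$ we conclude that either $u$ is constant or $u\le 0$. In the latter case $\sup_{M}u\le 0<+\infty$, so $u$ is a bounded-above weak solution of $\Delta_{f}u\ge 0$; by Theorem~\ref{th_solitonparabolic} the soliton is $f$-parabolic, hence $u$ is constant. $\square$
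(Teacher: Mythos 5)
Your proof is correct and follows essentially the same route the paper intends: the corollary is stated there precisely as a consequence of Remark~\ref{rem_positivepart} (i.e.\ Theorem~\ref{th_liouville} applied to $u_{+}$, giving the dichotomy ``$u$ constant or $u\le 0$'') together with the $f$-parabolicity of shrinking solitons from Theorem~\ref{th_solitonparabolic}, which disposes of the bounded-above case exactly as in your write-up.
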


It can be shown that $f$-parabolicity implies the validity of the weak maximum
principle at infinity for the operator $\Delta_{f}$.
This follows in a way similar to the case $f=0$,  noting that the
weak maximum principle is equivalent to the property that if $u$
is a non-negative bounded function satisfying $\Delta_f u \geq \mu
u$ for some $\mu >0$ then $u\equiv 0$ (see \cite{PRS-Memoirs},
Theorem 3.11).

In a different direction, the diffusion operator $\Delta_{f}$ has a minimal, positive heat
kernel $p_f\left(  t,x,y\right)$ and the validity of the weak maximum principle at infinity is
also equivalent to the property%
\begin{equation}
\int_{M}p_{f}\left(  t,x,y\right)  e^{-f}d\mathrm{vol}\left(  y\right)
=1,\label{more2}%
\end{equation}
for every $t>0$ and for every $x\in M$, \cite{PRS-Memoirs}.

>From a probabilistic viewpoint, condition (\ref{more2}) states that the
diffusion process with transition probabilities $p_{f}\left(  t,x,y\right)  $
is Markovian, hence stochastically complete. In case $f\equiv0$, it is known
that stochastic completeness with respect to the Brownian motion on $\left(
M,\left\langle ,\right\rangle \right)  $ is related to $L^{1}$ Liouville type
properties for super-harmonic functions, \cite{G2}.

Rephrasing \ these properties for the operator $\Delta_{f}$, we say that
 the $L^1$ Liouville property for  $\Delta_f$-superharmonic functions holds if
every
$Lip_{loc}$ solution  of $\Delta_{f}u\leq0$ satisfying $0\leq u\in
L^{1}\left(  M,e^{-f}d\mathrm{vol}\right)  $ \ must be constant.

Using exactly the same proof as in the case $f\equiv0$, \cite{G2}, shows
that this is equivalent to the fact that for some, hence for all, $x\in
M$,
\begin{equation}
\label{Green_not_L1} \int_M G_f(x,y) \, e^{-f} d\mathrm{vol}(y) =
+\infty.
\end{equation}
Recalling that the Green kernel $G_{f}$
is related to the heat kernel $p_{f}$ by the formula%
\begin{equation}
G_{f}\left(  x,y\right)  =\int_{0}^{+\infty}p_{f}\left(  t,x,y\right)
dt,\label{more3},
\end{equation}
from the above circle of ideas one obtains
\begin{theorem}
If the weak maximum principle at infinity holds for $\Delta_{f}$
then the $L^1$ Liouville property for  $\Delta_f$-superharmonic functions holds.
\end{theorem}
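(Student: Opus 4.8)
The plan is to chain together the three ingredients displayed just before the statement: the characterization \eqref{Green_not_L1} of the $L^1$ Liouville property via non-integrability of the Green kernel $G_f$, the heat-kernel representation \eqref{more3} of $G_f$, and the characterization \eqref{more2} of the weak maximum principle at infinity as stochastic completeness of the diffusion with kernel $p_f$. Concretely, I assume $\Delta_f$ is non-parabolic (if it is parabolic there is nothing to prove, since a parabolic weighted manifold automatically has the $L^1$ Liouville property: any $0\le u\in L^1$ with $\Delta_f u\le 0$ is in particular a bounded-below superharmonic function, and one argues as in \cite{G2}; alternatively this case is subsumed in the computation below with $G_f$ formally replaced by a suitable potential). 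So suppose $G_f(x,\cdot)$ exists and is positive and minimal.

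The heart of the argument is the computation, for fixed $x\in M$,
\[
\int_M G_f(x,y)\,e^{-f}d\mathrm{vol}(y)
=\int_M\!\!\left(\int_0^{+\infty}p_f(t,x,y)\,dt\right)e^{-f}d\mathrm{vol}(y)
=\int_0^{+\infty}\!\!\left(\int_M p_f(t,x,y)\,e^{-f}d\mathrm{vol}(y)\right)dt.
\]
The interchange of the order of integration is legitimate because the integrand $p_f(t,x,y)\ge 0$, so Tonelli's theorem applies. Now, \emph{by hypothesis} the weak maximum principle at infinity holds for $\Delta_f$, hence by the quoted equivalence \eqref{more2} the inner integral equals $1$ for every $t>0$. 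Therefore the right-hand side is $\int_0^{+\infty}1\,dt=+\infty$, which is exactly condition \eqref{Green_not_L1}. Invoking once more the equivalence (established exactly as in the case $f\equiv 0$, \cite{G2}) between \eqref{Green_not_L1} and the $L^1$ Liouville property for $\Delta_f$-superharmonic functions, we conclude that every $Lip_{loc}$ solution of $\Delta_f u\le 0$ with $0\le u\in L^1(M,e^{-f}d\mathrm{vol})$ is constant.

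The step I expect to require the most care is not the Tonelli interchange itself but the verification that the quoted equivalences are available in the weighted setting with the regularity at hand: namely that \eqref{more2} $\iff$ weak maximum principle at infinity, and \eqref{Green_not_L1} $\iff$ $L^1$ Liouville property, both hold verbatim for the diffusion operator $\Delta_f=e^f\operatorname{div}(e^{-f}\nabla\,\cdot\,)$ on $(M,\langle,\rangle,e^{-f}d\mathrm{vol})$. For the first this is stated in \cite{PRS-Memoirs}; for the second one notes that $\Delta_f$ is a uniformly elliptic operator in divergence form with respect to the measure $e^{-f}d\mathrm{vol}$, so Grigor'yan's potential-theoretic arguments in \cite{G2} transpose without change, the only substantive points being the existence of the minimal heat kernel $p_f$ and the self-adjointness of $\Delta_f$ in $L^2(M,e^{-f}d\mathrm{vol})$ — both already recorded in the preceding discussion. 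Once these are in place the proof is the three-line computation above.
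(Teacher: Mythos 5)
Your proof is correct and follows exactly the route the paper intends: the paper gives no detailed argument but explicitly assembles the same "circle of ideas" — the equivalence (\ref{more2}) of the weak maximum principle with stochastic completeness, the representation (\ref{more3}) of $G_f$ via $p_f$, and the Grigor'yan-type equivalence (\ref{Green_not_L1}) with the $L^1$ Liouville property — and your Tonelli computation (plus the easy parabolic case) is precisely the chaining it leaves to the reader.
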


In particular, combining with Theorem \ref{th_maximum} we conclude the
validity of the next Liouville type property of Ricci soliton.

\begin{theorem}
\label{th_l1-superharmonic}Let $\left(  M,\left\langle ,\right\rangle ,\nabla
f\right)  $ be a complete, gradient Ricci soliton. Then the
$L^1$ Liouville property for  $\Delta_f$-superharmonic functions holds.
\end{theorem}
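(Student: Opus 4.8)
The plan is to deduce Theorem~\ref{th_l1-superharmonic} as a direct corollary of the chain of implications already assembled in this section, so the proof is essentially a matter of checking that the hypotheses line up. First I would invoke Corollary~\ref{cor_maximum}: since $(M,\langle,\rangle,\nabla f)$ is a complete gradient Ricci soliton (shrinking, steady, or expanding), the weak maximum principle at infinity for the $f$-Laplacian $\Delta_f$ holds on $M$. This is the only place where the soliton structure enters; everything downstream is a general statement about weighted manifolds on which the weak maximum principle at infinity is available.

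Next I would apply the preceding unnamed theorem of this section, which asserts precisely that validity of the weak maximum principle at infinity for $\Delta_f$ implies the $L^1$ Liouville property for $\Delta_f$-superharmonic functions, i.e. that every $Lip_{loc}$ solution of $\Delta_f u \leq 0$ with $0 \leq u \in L^1(M, e^{-f}d\mathrm{vol})$ is constant. Composing these two implications gives the claim immediately. For completeness I would briefly recall the mechanism underlying that implication: by the heat-kernel characterisation, the weak maximum principle at infinity is equivalent to stochastic completeness, $\int_M p_f(t,x,y)\,e^{-f}d\mathrm{vol}(y) = 1$; integrating in $t$ via \eqref{more3} forces $\int_M G_f(x,y)\,e^{-f}d\mathrm{vol}(y) = +\infty$, which is \eqref{Green_not_L1}; and the latter, by the transplantation of Grigor'yan's argument for the case $f\equiv 0$, is equivalent to the $L^1$ Liouville property for $\Delta_f$-superharmonic functions.

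The only genuine subtlety — and the step I would flag as the main obstacle — is making sure the quoted equivalences survive the passage from the Laplace–Beltrami operator to the weighted operator $\Delta_f$. Concretely, one needs that $\Delta_f$ admits a minimal positive heat kernel $p_f(t,x,y)$ and, in the non-parabolic case, a minimal positive Green kernel $G_f(x,y)$ with the representation \eqref{more3}, and that the Li–Yau/Grigor'yan-type arguments relating stochastic completeness, non-integrability of the Green kernel, and $L^1$-Liouville go through verbatim with $d\mathrm{vol}$ replaced by $e^{-f}d\mathrm{vol}$. This is true because $\Delta_f$ is a symmetric diffusion operator with smooth coefficients, formally self-adjoint in $L^2(M,e^{-f}d\mathrm{vol})$, so the abstract semigroup theory and the maximum-principle manipulations are insensitive to the presence of the density; the excerpt has already recorded these facts. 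Hence no new work is required, and the proof reduces to citing Corollary~\ref{cor_maximum} together with the immediately preceding theorem.

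\begin{proof}
By Corollary~\ref{cor_maximum}, since $(M,\langle,\rangle,\nabla f)$ is a complete gradient Ricci soliton, the weak maximum principle at infinity for $\Delta_f$ holds on $M$. The conclusion now follows directly from the previous theorem, which states that the validity of the weak maximum principle at infinity for $\Delta_f$ implies the $L^1$ Liouville property for $\Delta_f$-superharmonic functions.
\end{proof}
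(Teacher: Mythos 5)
Your proposal is correct and follows essentially the same route as the paper: the paper obtains Theorem~\ref{th_l1-superharmonic} precisely by combining the unnamed theorem (weak maximum principle at infinity for $\Delta_f$ implies the $L^1$ Liouville property) with Theorem~\ref{th_maximum} via the soliton volume estimate, i.e.\ with Corollary~\ref{cor_maximum}. Your additional recap of the heat-kernel/Green-kernel mechanism matches the paper's own discussion and adds nothing that conflicts with it.
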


\begin{remark}
{\rm
Since, by Theorem \ref{th_solitonparabolic},  shrinking solitons are $f$-parabolic,
in this situation the same conclusion holds without any integrability assumption on $u$.
}
\end{remark}

By way of example, we now apply this result to prove the  rigidity of gradient Ricci
solitons with integrable scalar curvature stated in Theorem~\ref{th_scalar_L1}.

\begin{proof} [Proof of Theorem~\ref{th_scalar_L1}]
Recall that, by formula (\ref{basic7}) of Theorem \ref{th_scalar}, it holds%
\begin{equation}
\Delta_{f}S=\lambda S-\left\vert Ric\right\vert ^{2}, \label{more4}%
\end{equation}
where $\lambda<0$ is such that%
\begin{equation}
Ric+\mathrm{Hess}\left(  f\right)  =\lambda\left\langle ,\right\rangle .
\label{more5}%
\end{equation}
Since $S\geq0$, from the above we deduce%
\begin{equation}
\Delta_{f}S\leq0. \label{more6}%
\end{equation}
Applying Theorem \ref{th_l1-superharmonic} we obtain that $S$ is constant.
Using this information into (\ref{more4}) implies that $Ric\equiv0$,
and the required conclusion follows from Theorem~\ref{th_tashiro} as
in the last part of the proof of Theorem~\ref{th_scalar_bound}
\end{proof}

\section*{Appendix}

In this section we provide a somewhat detailed proof of Theorem
\ref{th_tashiro}. Our basic reference for Riemannian geometry is \cite{Pe}.
Notation is that introduced there. Note that our proof generalizes to give
a characterization of general model manifolds via second (and third) order
differential systems, \cite{PR}.\medskip

We shall use the following density result, \cite{B}, \cite{Wo}. Following
Bishop, recall that, given a complete manifold $\left(  M,\left\langle
,\right\rangle \right)  $ and a reference point $o\in M$, then $p\in
cut\left(  o\right)  $ is an\textit{ ordinary cut point }if there are at least
two distinct minimizing geodesics from $o$ to $p$. Using the infinitesimal
Euclidean law of cosines, it is not difficult to show that at an ordinary cut
point $p$ the distance function $r\left(  x\right)  =\mathrm{dist}_{\left(
M,\left\langle ,\right\rangle \right)  }\left(  x,o\right)  $ is not
differentiable, \cite{Wo}.

\begin{theorem}
[Bishop density result]\label{th_bishop}Let $\left(  M,\left\langle
,\right\rangle \right)  $ be a complete Riemannian manifold and let $o\in M$
be a reference point. Then the ordinary cut-points of $o$ are dense in
$cut\left(  o\right)  $. In particular, if the distance function $r\left(
x\right)  $ from $o$ is differentiable on the (punctured) open ball
$B_{R}\left(  o\right)  \backslash\left\{  o\right\}  $ then $B_{R}\left(
o\right)  \cap cut\left(  o\right)  =\emptyset$.
\end{theorem}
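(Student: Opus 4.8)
The plan is to deduce the ``in particular'' clause formally from the density statement, and to prove density by a contradiction argument resting on Brouwer's invariance of domain. I would start from the classical dichotomy for the cut locus: every $p\in cut(o)$ either is an ordinary cut point, or is joined to $o$ by a \emph{unique} minimizing geodesic $\gamma$ along which $p$ is the first conjugate point of $o$. Granting that a cut point of this second kind is always a limit of ordinary cut points, the final clause is immediate: since $r$ fails to be differentiable at every ordinary cut point (as recalled just above) and $o\notin cut(o)$, differentiability of $r$ on $B_{R}(o)\setminus\{o\}$ forces $B_{R}(o)$ to contain no ordinary cut point; but if $B_{R}(o)$ contained any cut point $q$, density would produce an ordinary cut point inside the open ball $B_{R}(o)$, a contradiction. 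Hence $B_{R}(o)\cap cut(o)=\emptyset$.

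To prove the density claim, let $p$ be a cut point of the second kind, let $\gamma:[0,\ell]\to M$ be the unique unit-speed minimizing geodesic with $\gamma(0)=o$ and $\gamma(\ell)=p$, and set $v=\gamma'(0)$; writing $c(u)\in(0,+\infty]$ for the distance from $o$ to its first conjugate point along $s\mapsto\exp_{o}(su)$, the dichotomy gives $c(v)=\ell$. Suppose, for contradiction, that $p$ has an open neighborhood $U\subset M\setminus\{o\}$ containing no ordinary cut point. Then every $q\in U$, whether or not it lies on $cut(o)$, is joined to $o$ by a unique minimizing geodesic; let $T(q)\in S_{o}M$ be its initial velocity. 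A routine limiting argument --- subsequential limits (in the compact sphere $S_{o}M$) of minimizing geodesics of lengths $r(q_{k})\to r(q)$ joining $o$ to $q_{k}\to q$ are minimizing, hence coincide with the unique one --- shows that $T$ is continuous on $U$. Consequently $\Phi:U\to T_{o}M$, $\Phi(q)=r(q)\,T(q)$, is continuous, and it is injective because $\exp_{o}\circ\Phi=\mathrm{id}_{U}$; by invariance of domain $\Phi(U)$ is \emph{open} in $T_{o}M$.

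On the other hand, a minimizing geodesic carries no conjugate point in its interior, so $r(q)\le c(T(q))$ for every $q\in U$, i.e. $\Phi(U)\subseteq E:=\{tu: u\in S_{o}M,\ 0\le t\le c(u)\}$. Since $\Phi(p)=\ell v$ and $\ell=c(v)$, the vector $(\ell+\varepsilon)v$ lies outside $E$ for every $\varepsilon>0$, so $\Phi(p)\notin\mathrm{int}\,E$ --- contradicting the fact that $\Phi(U)$ is an open subset of $E$ containing $\Phi(p)$. This proves that $p$ is a limit of ordinary cut points, and the theorem follows. The step I expect to be the main obstacle is precisely the observation (and its careful justification) that, under the contradiction hypothesis, the local inverse of $\exp_{o}$ extends to a continuous injection of the \emph{whole} neighborhood $U$, including its intersection with $cut(o)$, which is exactly what makes invariance of domain applicable; the two supporting facts --- continuity of $T$ via compactness and stability of minimizers under limits, and the absence of interior conjugate points on minimizing segments --- are classical.
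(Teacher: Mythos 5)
Your argument is correct, but note that the paper itself does not prove Theorem~\ref{th_bishop}: it is imported verbatim from Bishop \cite{B} (with the non-differentiability of $r$ at ordinary cut points taken from Wolter \cite{Wo}), so there is no proof in the text to compare with; what you have written is a self-contained alternative to Bishop's original decomposition argument. Your reduction of the ``in particular'' clause to density plus the non-differentiability of $r$ at ordinary cut points is exactly the intended use of the statement, and the density proof itself holds up: the dichotomy (a non-ordinary cut point is the first conjugate point along its unique minimizer, since minimizing segments carry no interior conjugate points), the continuity of the initial-direction map $T$ on $U$ via compactness of $S_oM$ and uniqueness of minimizers, injectivity of $\Phi$ from $\exp_o\circ\Phi=\mathrm{id}_U$, openness of $\Phi(U)$ by invariance of domain (applied in charts, since $U$ is only an open subset of the manifold), and the punchline that $\ell v$ cannot be interior to $E=\{tu:\ 0\le t\le c(u)\}$ because $(\ell+\varepsilon)v\notin E$ when $c(v)=\ell$, all check out. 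The only ingredients you use tacitly and should flag as standard are that a point off $cut(o)$ is joined to $o$ by a \emph{unique} minimizing geodesic (otherwise it would itself be a cut point), and that uniqueness of the polar representation $tu$, $t>0$, $u\in S_oM$, is what makes the membership test for $E$ along the ray through $v$ legitimate; with those remarks your proof is complete, and arguably more transparent than the original reference, at the price of invoking Brouwer's invariance of domain where Bishop's and Wolter's treatments stay within metric-geometric arguments.
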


Now, let $f\in C^{\infty}\left(  M\right)  $ be a solution of%
\begin{equation}
\mathrm{Hess}\left(  f\right)  =\lambda\left\langle ,\right\rangle ,
\label{appendix1}%
\end{equation}
for some constant $\lambda\neq0$. Without loss of generality, we assume
$\lambda>0$. To simplify the exposition we proceed by steps.\medskip

\textit{Step 1.} We note first that $f$ has a critical point. Indeed,
by contradiction, suppose $\left\vert \nabla f\right\vert \neq0$ on $M$.
Consider the vector field $X=\nabla f/\left\vert \nabla f\right\vert $ on $M$.
Clearly, $X$ is complete because $\left\vert X\right\vert \in L^{\infty
}\left(  M\right)  $ and $\left(  M,\left\langle ,\right\rangle \right)  $ is
geodesically complete. Let $\gamma:\mathbb{R}\rightarrow M$ be an integral
curve of $X$, i.e., $X_{\gamma}=\dot{\gamma}$. It is readily verified from
equation (\ref{appendix1}) that, for every vector field $Y$,%
\begin{equation}
\left\langle D_{\dot{\gamma}}\dot{\gamma},Y\right\rangle =\frac{1}{\left\vert
\nabla f\right\vert }\mathrm{Hess}\left(  f\right)  \left(  \dot{\gamma
},Y\right)  - \frac{1}{\left\vert
\nabla f\right\vert } \mathrm{Hess}\left(  f\right)  \left(  \dot{\gamma
},\dot{\gamma
} \right)\left\langle \dot{\gamma},Y\right\rangle =0.
\label{appendix2}%
\end{equation}
Therefore, $\gamma$ is a geodesic. Evaluating (\ref{appendix1}) along $\gamma$
we deduce that the smooth function $y\left(  t\right)  =f\circ\gamma\left(
t\right)  $ satisfies%
\[
\frac{d^{2}y}{dt^{2}}=\lambda.
\]
Integrating on $[0,t]$ yields $y^{\prime}\left(  t\right)  =\lambda
t+y^{\prime}\left(  0\right)  ,$ so that $y^{\prime}\left(  t_{0}\right)  =0$
where $t_{0}=-\lambda^{-1}y^{\prime}\left(  0\right)  $. It follows that%
\begin{equation}
0=y^{\prime}\left(  t_{0}\right)  =\left\langle \nabla f\left(  \gamma\left(
t_{0}\right)  \right)  ,\dot{\gamma}\left(  t_{0}\right)  \right\rangle
=\left\vert \nabla f\right\vert \left(  t_{0}\right)  \neq0, \label{appendix5}%
\end{equation}
contradiction.\medskip

\textit{Step 2.} Let $o\in M$ \ be a critical point of $\nabla f$ and set
$r\left(  x\right)  =\mathrm{dist}_{\left(  M,\left\langle ,\right\rangle
\right)  }\left(  x,o\right)  $. Having fixed $x\in M$, let $\gamma
:[0,r\left(  x\right)  ]\rightarrow M$ be a unit speed, minimizing geodesic
issuing from $\gamma\left(  0\right)  =o$. Therefore, $y\left(  t\right)
=f\circ\gamma\left(  t\right)  $ solves the Cauchy problem%
\begin{equation}
\left\{
\begin{array}
[c]{l}%
\dfrac{d^{2}y}{dt^{2}}=\lambda\\
y^{\prime}\left(  0\right)  =0,\text{ }y\left(  0\right)  =f\left(  o\right)
.
\end{array}
\right.  \label{appendix6}%
\end{equation}
Integrating on $[0,r\left(  x\right)  ]$ we deduce that%
\begin{equation}
f\left(  x\right)  =\alpha\left(  r\left(  x\right)  \right)  ,
\label{appendix7}%
\end{equation}
where%
\begin{equation}
\alpha\left(  t\right)  =\frac{\lambda}{2}t^{2}+f\left(  o\right)  .
\label{appendix8}%
\end{equation}
In particular, $f$ is a proper function with precisely one critical
point.\medskip

\textit{Step 3. }\ Since $f\left(  x\right)  =\alpha\left(  r\left(  x\right)
\right)  $ is smooth and $\alpha\left(  t\right)  $ satisfies $\alpha^{\prime
}\left(  t\right)  \neq0$ for every $t>0$ , it follows that%
\begin{equation}
r\left(  x\right)  =\alpha^{-1}\left(  f\left(  x\right)  \right)
\label{appendix9}%
\end{equation}
is smooth on $M\backslash\left\{  o\right\}  .$ According to Theorem
\ref{th_bishop} we have $cut\left(  o\right)  =\emptyset$ and the exponential
map $\exp_{o}:T_{o}M\approx\mathbb{R}^{m}\rightarrow M$ realizes a smooth
diffeomorphism. Let us introduce geodesic polar coordinates $\left(
r,\theta\right)  \in(0,+\infty)\times S^{m-1}$ on $T_{o}M$. Moreover, let us
consider a local orthonormal frame $\left\{  E_{\alpha}\right\}  $ on
$S^{m-1}$ with dual frame $\left\{  \theta^{\alpha}\right\}  $ \ and extend
them radially. Then, by Gauss lemma,%
\begin{equation}
\left\langle ,\right\rangle =dr\otimes dr+\sum_{\alpha,\beta=1}^{m-1}%
\sigma_{\alpha\beta}\left(  r,\theta\right)  \theta^{\alpha}\otimes
\theta^{\beta}, \label{appendix11}%
\end{equation}
where, since the metric is infinitesimally Euclidean,
\begin{equation}
\sigma_{\alpha\beta}\left(  r,\theta\right)  =r^2\delta_{\alpha\beta}+o\left(
r^{2}\right)  \text{, as }r\searrow0. \label{appendix13}%
\end{equation}
We shall show that%
\[
\sigma_{\alpha\beta}\left(  r,\theta\right)  =r^{2}\delta_{\alpha\beta}.
\]
Since $\left(  [0,+\infty)\times S^{m-1},dr\otimes dr+r^{2}
\sum_\alpha \theta^\alpha\otimes \theta^\alpha\right)$ is isometric to $\mathbb{R}^{m}$
the proof will be completed.\medskip

\textit{Step 4. }Let $L_{\nabla r}$ denote the Lie derivative in the radial
direction $\nabla r$. We have%
\begin{equation}
\frac{\partial}{\partial r}\sigma_{\alpha\beta}=L_{\nabla r}\left\langle
,\right\rangle \left(  E_{\alpha},E_{\beta}\right)  =2 \mathrm{Hess}\left(
r\right)  \left(  E_{\alpha},E_{\beta}\right)  . \label{appendix12}%
\end{equation}
On the other hand, in view of (\ref{appendix7}), $\nabla r=\nabla f/\left\vert
\nabla f\right\vert $. \ Whence, using equation (\ref{appendix1}) we deduce
that, for every $E_{\alpha},E_{\beta}\in\nabla r^{\bot}$,%
\begin{equation}
\mathrm{Hess}\left(  r\right)  \left(  E_{\alpha},E_{\beta}\right)
=\left\langle D_{E_{\alpha}}\frac{\nabla f}{\left\vert \nabla f\right\vert
},E_{\beta}\right\rangle =\frac{1}{r}\sigma_{\alpha\beta}. \label{appendix15}%
\end{equation}
Combining (\ref{appendix13}), (\ref{appendix12}) and (\ref{appendix15}) we
conclude that the coefficients $\sigma_{\alpha\beta}$ solve the asymptotic
Cauchy problem%
\[
\left\{
\begin{array}
[c]{l}%
\dfrac{\partial\sigma_{\alpha\beta}}{\partial r}=\dfrac{2}{r}\sigma
_{\alpha\beta}\\
\sigma_{\alpha\beta}\left(  r,\theta\right)  =r^2 \delta_{\alpha\beta}+o\left(
r^{2}\right)  \text{, as }r\searrow0.
\end{array}
\right.
\]
Integrating finally gives%
\[
\sigma_{\alpha\beta}\left(  r,\theta\right)  =r^{2}\delta_{\alpha\beta},
\]
as desired.

\bigskip


\begin{thebibliography}{99}                                                                                               %
\bibitem {B}R. L. Bishop, \textit{Decomposition of cut loci.} Proc. Amer.
Math. Soc. \textbf{65} (1977), 133--136.

\bibitem{CZ} H.-D. Cao, D. Zhou \textit{On complete gradient shrinking Ricci solitons.} arXiv:0903.3932.

\bibitem {ELM}M. Eminenti, G. La Nave, C. Mantegazza, \textit{Ricci solitons:
the equation point of view. }Manuscripta Math. \textbf{127} (2008), 345--367.

\bibitem {LR-Annalen}M. Fern\'{a}ndez-L\'{o}pez, E. Garc\'{\i}a-R\'{\i}o,
\textit{A remark on compact Ricci solitons.} Math. Ann. \textbf{340} (2008), 893--896.

\bibitem{GilbargTrudinger} D.~Gilbarg, N.~Trudinger, \textit{Elliptic
Partial Differential Equations of Second Order}, second edition,
Springer-Verlag, Berlin 1983.

\bibitem {G2}A. Grigor'yan, \textit{Stochastically complete manifolds and
summable harmonic functions.} (in Russian) Izv. An SSSR, ser. matem.,
\textbf{52} no. 5 (1988), 1102--1108. Engl. transl. Math. USSR Izvestiya
\textbf{33} (1989), 425--423.

\bibitem {G}A. Grigor'yan, \textit{Analytic and geometric background of
recurrence and non-explosion of the Brownian motion on Riemannian manifolds.
}Bull. Amer. Math. Soc. (N.S.) \textbf{36} (1999), 135--249.

\bibitem{Mo-notices} F. Morgan, \textit{Manifolds with density.} Notices of the
Amer. Math. Soc. \textbf{52} (2005), 853--858.

\bibitem {N}A. Naber, \textit{Noncompact Shrinking }$\mathit{4}$%
\textit{-Solitons with Nonnegative Curvature. }arXiv:0710.5579.

\bibitem {Pe}P. Petersen, \textit{Riemannian geometry.} Graduate Texts in
Mathematics, \textbf{171}. Springer-Verlag, New York, 1998.

\bibitem {PW1}P. Petersen, W. Wylie, \textit{Rigidity of gradient Ricci
solitons.} arXiv:0710.3174.

\bibitem {PW2}P. Petersen, W. Wylie, \textit{On the classification of gradient
Ricci solitons.} arXiv:0712.1298.

\bibitem {PR}S. Pigola, M. Rimoldi, \textit{Characterizations of model
manifolds by means of certain differential systems.} Submitted.

\bibitem {PRS-Progress}S. Pigola, M. Rigoli, A.G. Setti, \textit{Vanishing and
finiteness results in geometric analysis: a generalization of the Bochner
technique.} Progress in Mathematics \textbf{266} (2008), Birkh\"{a}user.

\bibitem {PRS-vanishing}S. Pigola, M. Rigoli, A.G. Setti, \textit{Vanishing
theorems on Riemannian manifolds, and geometric applications.} J. Funct. Anal.
\textbf{229} (2005), 424--461.

\bibitem {PRS-Memoirs}S. Pigola, M. Rigoli, A.G. Setti, \textit{Maximum
principles on Riemannian manifolds and applications.} Mem. Amer. Math. Soc.
\textbf{174} (2005), no. 822, x+99 pp.


\bibitem {PRS-gafa}S. Pigola, M. Rigoli, A.G. Setti, \textit{Volume growth,
\textquotedblleft a-priori\textquotedblright\ estimates, and geometric
applications.} G.A.F.A. \textbf{13} (2003), 1302--1328.

\bibitem {RS}M. Rigoli, A.G. Setti, \textit{Liouville type theorems for
}$\varphi$\textit{-subharmonic functions.} Rev. Mat. Iberoam. \textbf{17}
(2001), 471--520.

\bibitem {T}Y. Tashiro, \textit{Complete Riemannian manifolds and some vector
fields.} Trans. Amer. Math. Soc. \textbf{117} (1965), 251--275.

\bibitem {Wo}F.-E. Wolter, \textit{Distance function and cut loci on a
complete Riemannian manifold. }Arch. Math. (Basel) \textbf{32} (1979), 92--96.

\bibitem {WW}G. Wei, W. Wylie, \textit{Comparison Geometry for the Bakry-Emery
Ricci Tensor.} arXiv:0706.1120.

\bibitem {Y}S.T. Yau, \textit{Some function theoretic properties of complete
Riemannian manifolds and their applications to geometry.} Indiana Univ. Math.
J. \textbf{25} (1976), 659--670.

\bibitem{Zhang-ArXiv} H.-Z. Zhang, \textit{Gradient Shrinking Solitons with Vanishing Weyl
Tensor}.   arXiv:0807.1582v3.

\bibitem{Zhang-PAMS} Z.-H. Zhang, \textit{On the completeness of gradient Ricci solitons}.
Proc. Amer. Math. Soc.  {\bf 137}  (2009),  no. 8, 2755--2759.


\end{thebibliography}
\end{document}